\documentclass{amsproc}

\usepackage{amsmath,amssymb,mathtools}
\usepackage{mathrsfs}
\usepackage{microtype}
\usepackage[hidelinks]{hyperref}
\hypersetup{
  pdftitle={An Explicit Logistic Damping Criterion for Boundedness in a Fully Parabolic Keller--Segel System},
  pdfauthor={Jie Jiang},
  pdfsubject={Fully parabolic Keller--Segel systems with quadratic logistic damping},
  pdfkeywords={Keller--Segel system, logistic damping, explicit threshold, Holder regularity, maximal regularity}
}
\usepackage[nameinlink,capitalize,noabbrev]{cleveref}

\allowdisplaybreaks
\numberwithin{equation}{section}

\theoremstyle{plain}
\newtheorem{theorem}{Theorem}[section]
\newtheorem{proposition}[theorem]{Proposition}
\newtheorem{lemma}[theorem]{Lemma}
\newtheorem{corollary}[theorem]{Corollary}

\theoremstyle{remark}
\newtheorem{remark}[theorem]{Remark}

\crefname{theorem}{Theorem}{Theorems}
\crefname{proposition}{Proposition}{Propositions}
\crefname{lemma}{Lemma}{Lemmas}
\crefname{corollary}{Corollary}{Corollaries}
\crefname{remark}{Remark}{Remarks}

\newcommand{\Om}{\Omega}

\newcommand{\pa}{\partial}
\newcommand{\D}{\mathrm D}

\newcommand{\norm}[1]{\left\lVert #1\right\rVert}

\newcommand{\pos}[1]{\left(#1\right)_{+}}

\title[An explicit logistic damping criterion]{An Explicit Logistic Damping Criterion for Boundedness in a Fully Parabolic Keller--Segel System}

\author[J. Jiang]{Jie Jiang}
\address{Wuhan Institute of Physics and Mathematics, Innovation Academy for Precision Measurement Science and Technology, Chinese Academy of Sciences, Wuhan 430071, Hubei Province, P.R. China}
\email{jiang@apm.ac.cn}

\subjclass[2020]{Primary 35K51, 35K55; Secondary 35B35, 92C17}
\keywords{Keller--Segel system, logistic damping, explicit criterion, comparison principle, uniform-in-time boundedness, H\"older regularity}
\date{\today}

\begin{document}

\begin{abstract}
	We study the fully parabolic Keller--Segel system
	\[
	u_t=\Delta u-\chi\nabla\!\cdot(u\nabla v)+\lambda u-\mu u^2,
	\qquad
	\tau v_t=\Delta v-v+u
	\]
	in a bounded smooth convex domain. For every fixed $\tau>0$, we prove that
	\[
	\mu>\frac{N\chi}{4}
	\]
	guarantees global existence and uniform-in-time boundedness. This
	coefficient-explicit sufficient condition is independent of $\tau$ and
	involves no embedding or maximal-regularity constants. To the best of our
	knowledge, it is the first coefficient-explicit boundedness criterion that
	remains unchanged for all $\tau>0$ in arbitrary space dimension.
	
	The proof is built on a new auxiliary comparison function
	\[
	Y_\tau
	=u+\frac{\chi\tau}{2}|\nabla v|^2-(\tau-1)\Delta v,
	\]
	which satisfies a closed scalar parabolic inequality for every $\tau>0$.
	When $\tau\ge1$, this inequality yields a direct pointwise comparison and
	an explicit bound for $u$. When $0<\tau<1$, it instead provides a
	uniform upper bound for $v$. Applying a parabolic squeezing argument to
	the transform $z=e^{-\chi v/2}$ then yields a uniform H\"older bound for
	$v$. H\"older--Sobolev interpolation and weighted maximal
	$L^p$-regularity subsequently give an $L^p$-bound for $u$ with
	sufficiently large $p$, and standard parabolic smoothing closes the
	argument.
\end{abstract}

\maketitle

\section{Introduction and main result}

Let $\Omega\subset\mathbb R^N$, $N\ge1$, be a bounded smooth domain. We
consider the fully parabolic Keller--Segel system with quadratic logistic
damping
\begin{equation}
	\label{eq:system}
	\left\{
	\begin{aligned}
		u_t&=\Delta u-\chi\nabla\!\cdot(u\nabla v)+\lambda u-\mu u^2,
		&&x\in\Omega,\ t>0,\\
		\tau v_t&=\Delta v-v+u,
		&&x\in\Omega,\ t>0,\\
		\partial_\nu u&=\partial_\nu v=0,
		&&x\in\partial\Omega,\ t>0,\\
		u(\cdot,0)&=u_0,\qquad v(\cdot,0)=v_0,
		&&x\in\Omega,
	\end{aligned}
	\right.
\end{equation}
where $\chi,\mu,\tau>0$ and $\lambda\in\mathbb R$. Here $u$ denotes the
cell density and $v$ the concentration of the chemoattractant. The attractive
cross-diffusion tends to concentrate cells, whereas the term $-\mu u^2$
penalizes high densities. Determining how strong this quadratic degradation
must be in order to prevent chemotactic concentration is therefore a natural
quantitative problem. In the normalization \eqref{eq:system}, the cell
diffusivity equals one and the effective signal diffusivity equals $1/\tau$;
hence $\tau=1$ is the equal-diffusivity case. We refer to
\cite{BellomoBellouquidTaoWinkler2015} for general background on
Keller--Segel models and chemotaxis-growth systems.

In the parabolic--elliptic setting, Tello and Winkler
\cite{TelloWinkler2007} showed that sufficiently strong logistic damping
prevents blow-up. For the fully parabolic problem, Winkler
\cite{Winkler2010} proved global boundedness on smooth bounded convex
domains for every fixed $\tau>0$, provided that $\mu$ is sufficiently
large. In the equal-diffusion case $\tau=1$, he further observed that
\[
Z:=\frac12|\nabla v|^2+\frac1\chi u
\]
satisfies a closed scalar parabolic inequality, from which the comparison
principle yields the explicit condition $\mu>N\chi/4$. For $\tau\ne1$,
however, the mismatch of diffusion rates prevents this particular scalar closure, and
the largeness condition obtained by the mixed energy argument is only implicit.

Most subsequent refinements concerned the equal-diffusion case
$\tau=1$. On general smooth three-dimensional domains, Tao and
Winkler \cite{TaoWinkler2015ZAMP} obtained the explicit condition
$\mu\ge23\chi$ for a more general chemotaxis--Stokes system, which
contains the fluid-free Keller--Segel system as a special case. Lin
and Mu \cite{LinMu2016} subsequently improved this condition to
$\mu>20\chi$ for the Keller--Segel system. Zheng \cite{Zheng2017}
extended the pointwise comparison framework of \cite{Winkler2010} to
damping terms of the form $-\mu u^r$, $r\ge2$, and obtained explicit
ultimate bounds. Xiang \cite{Xiang2018SIAM} further improved the
explicit coefficient condition in three dimensions and quantified
the dependence of the resulting uniform bounds on $\chi$ and $\mu$.
Zheng et al. \cite{ZhengEtAl2018} established boundedness on general
smooth domains in arbitrary dimensions, at the cost of a condition
involving a maximal Sobolev regularity constant.

A coefficient-explicit treatment allowing unequal diffusion rates was
given by Xiang \cite{Xiang2018JMAA} for the more general system with
diffusion coefficients $d_1,d_2$ and signal production rate $\alpha$. He
recovered
\(
\mu>\frac{N\alpha\chi}{4d_1}
\)
when $d_1=d_2$ and the domain is convex; more involved
diffusion-dependent conditions for unequal diffusivities in dimensions
$3\le N\le5$ were also derived there.

To the best of our knowledge, it remained open whether the
equal-diffusion condition
\(
\mu>\frac{N\chi}{4}
\)
continues to ensure global boundedness, without any dependence on
$\tau$, in arbitrary space dimension for every fixed $\tau>0$. We answer
this question affirmatively on bounded smooth convex domains.

\begin{theorem}
	\label{thm:main}
	Let $\Omega\subset\mathbb R^N$ be a bounded convex domain with smooth
	boundary. Assume that $\chi>0$, $\tau>0$, $\lambda\in\mathbb R$, and
	\begin{equation}
		\label{eq:threshold}
		\mu>\frac{N\chi}{4}.
	\end{equation}
	Let $u_0\in C(\overline\Omega)$ and
	$v_0\in W^{1,\infty}(\Omega)$ be nonnegative. Then
	\eqref{eq:system} possesses a unique nonnegative global classical solution
	satisfying
	\begin{equation}
		\label{eq:mainbounded}
		\sup_{t>0}\left(
		\|u(t)\|_{L^\infty(\Omega)}
		+\|v(t)\|_{W^{1,\infty}(\Omega)}
		\right)<\infty.
	\end{equation}
	Set
	\begin{equation}
		\label{eq:deltaCstar}
		\delta:=\mu-\frac{N\chi}{4},
		\qquad
		C_*:=\frac{(\lambda+\tau^{-1})_+^2}{4\delta}.
	\end{equation}
	If $\tau>1$, then
\begin{equation}
	\label{eq:explicitabsorbing}
	\limsup_{t\to\infty}\|u(t)\|_{L^\infty(\Omega)}
	\le
	\frac{\bigl((\tau-1)\lambda+\chi\tau C_*\bigr)_+}
	{(\tau-1)\mu+\chi},
\end{equation}
	whereas, if $\tau=1$, then
	\begin{equation}
		\label{eq:tau1absorbing}
		\limsup_{t\to\infty}\|u(t)\|_{L^\infty(\Omega)}
		\le C_*.
	\end{equation}
\end{theorem}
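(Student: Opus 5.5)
The plan follows the route sketched in the abstract. The auxiliary function is
\[
Y_\tau=u+\frac{\chi\tau}{2}|\nabla v|^2-(\tau-1)\Delta v .
\]

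\textbf{Local theory and extensibility.} First I invoke the standard local theory: there is a unique nonnegative classical solution on a maximal interval $(0,T_{\max})$. It has the criterion that $T_{\max}<\infty$ forces $\|u(t)\|_{L^\infty}\to\infty$.

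\textbf{The inequality for $Y_\tau$.} Using $\tau v_t=\Delta v-v+u$ I compute $Y_{\tau,t}-\tau^{-1}\Delta Y_\tau$, i.e. I measure $Y_\tau$ against the signal diffusion $\tau^{-1}\Delta$.

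\begin{itemize}
\item The term $-(\tau-1)\Delta v$ is designed to cancel the cross terms $(1-\tau^{-1})\Delta u$ that arise from the mismatch between $\Delta u$ and $\tau^{-1}\Delta u$. Indeed $(\tau-1)\Delta v_t=\frac{\tau-1}{\tau}(\Delta^2 v-\Delta v+\Delta u)$.
\item Bochner's identity $\frac12\Delta|\nabla v|^2=\nabla v\cdot\nabla\Delta v+|D^2v|^2$ together with $|\Delta v|^2\le N|D^2v|^2$ produces a good term $-\chi|D^2v|^2$.
\item The chemotactic term $-\chi\nabla u\cdot\nabla v-\chi u\Delta v$ is absorbed by a transport term $b\cdot\nabla Y_\tau$ plus $\chi u|\Delta v|$. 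This last piece is controlled by Young's inequality: $\chi u|\Delta v|\le \frac{N\chi}{4}u^2+\frac{\chi}{N}|\Delta v|^2\le \frac{N\chi}{4}u^2+\chi|D^2 v|^2$.
\item What remains is $-\delta u^2+(\lambda+\tau^{-1})_+u$ plus negative multiples of $|\nabla v|^2$ and of $Y_\tau$.
\end{itemize}

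The aim is a closed inequality
\[
Y_{\tau,t}\le \tau^{-1}\Delta Y_\tau+b\cdot\nabla Y_\tau-cY_\tau+C,
\]
with $C$ derived from $C_*$ via $-\delta u^2+au\le a^2/(4\delta)$.

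On the boundary, convexity gives $\partial_\nu|\nabla v|^2\le0$. The term $\partial_\nu\Delta v$ has to be handled through the equation for $v$ (which gives $\Delta v=\tau v_t+v-u$ on $\partial\Omega$) and $\partial_\nu u=0$, so that $\partial_\nu Y_\tau\le0$.

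\textbf{Case $\tau\ge1$.} The comparison principle bounds $Y_\tau$ from above. Since $\Delta v=\tau v_t+v-u$, I rewrite $Y_\tau=\tau u+\frac{\chi\tau}{2}|\nabla v|^2-(\tau-1)(\tau v_t+v)$. The limsup is then most naturally obtained from a scalar inequality for $u$ itself. On the ODE level $u_t\le \dots+\lambda u-\mu u^2+\chi u(Y_\tau-u)/(\tau-1)$-type relations yield the rational bound \eqref{eq:explicitabsorbing}. For $\tau=1$ this reduces to Winkler's $Z$-argument, giving $C_*$ as in \eqref{eq:tau1absorbing}.

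I would verify the constants by checking the ODE comparison $y'=\lambda y-\mu y^2+\frac{\chi}{\tau-1}y(\tau C_*-y)$. Its positive equilibrium is exactly $\frac{(\tau-1)\lambda+\chi\tau C_*}{(\tau-1)\mu+\chi}$, which confirms the intended mechanism. Boundedness of $u$ then gives $W^{1,\infty}$ bounds on $v$ by semigroup estimates.

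\textbf{Case $0<\tau<1$.} Here $-(\tau-1)\Delta v=(1-\tau)\Delta v$, so an upper bound on $Y_\tau$ does not bound $u$. Integrating against the Neumann heat kernel associated with $-\Delta+1$ instead yields a uniform upper bound for $v$.

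Next set $z=e^{-\chi v/2}$. This function is bounded below by a positive constant, and a computation shows it satisfies a parabolic inequality of the form $\tau z_t\ge\Delta z-\dots$ with good sign structure. A De Giorgi/Krylov–Safonov-type squeezing (oscillation decay) argument then gives a uniform $C^\alpha$ bound for $z$, hence for $v$.

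Next, Hölder–Sobolev interpolation controls $\|\nabla v\|_{L^{2p}}^{2p}\lesssim\|v\|_{C^\alpha}^{p}\|D^2v\|_{L^p}^{p}$-type quantities with small constants. Testing the $u$-equation by $u^{p-1}$ and using time-weighted maximal $L^p$-regularity for $v$ then closes an $L^p$ bound for large $p$. Moser/semigroup smoothing upgrades this to $L^\infty$, and $T_{\max}=\infty$ follows.

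\textbf{Main obstacle.} I expect the hardest step to be the $0<\tau<1$ Hölder estimate: making the squeezing argument uniform in time using only an upper bound for $v$. The second difficulty is the exact pointwise algebra for $Y_\tau$, including the boundary sign of $\partial_\nu\Delta v$.
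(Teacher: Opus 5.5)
Your inequality for $Y_\tau$ and your treatment of $\tau\ge1$ coincide with the paper's argument. Incidentally, no transport term is needed there: the $+\chi\nabla u\cdot\nabla v$ produced by $\frac{\chi\tau}{2}\partial_t|\nabla v|^2$ cancels the $-\chi\nabla u\cdot\nabla v$ of the $u$-equation exactly.

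The genuine gap is the H\"older step for $0<\tau<1$. You use $Y_\tau\le M$ only to obtain $v\le v^*$, and then seek oscillation decay for $z=e^{-\chi v/2}$ from a single one-sided inequality. No such argument can work. Krylov--Safonov-type oscillation decay requires $z$ to be both a sub- and a supersolution with bounded right-hand sides, and one-sided information gives no modulus of continuity. For instance, take a spatially constant $z=\phi(t)$ that moves monotonically between $\frac12$ and $1$ in an arbitrarily short time. If $\phi$ is nondecreasing it satisfies $z_t-\Delta z\ge0$, and if nonincreasing it satisfies $z_t-\frac1\tau\Delta z\le0$.

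Moreover, the inequality you wrote, $\tau z_t\ge\Delta z-\dots$, has no bounded remainder. The signal equation gives $\tau z_t-\Delta z=-\frac{\chi^2}{4}z|\nabla v|^2+\frac\chi2 z(v-u)$. This is bounded above by $1/e$ (using $u\ge0$), but not below.

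The missing idea is to use the full pointwise content of $Y_\tau\le M$. Substituting $u=\tau v_t-\Delta v+v$ gives
\[
\tau\Bigl(v_t-\Delta v+\frac\chi2|\nabla v|^2\Bigr)+v=Y_\tau\le M .
\]
Since $z_t-\Delta z=-\frac\chi2 z\bigl(v_t-\Delta v+\frac\chi2|\nabla v|^2\bigr)$, this yields the complementary bound $z_t-\Delta z\ge-\chi M/(2\tau)$. Thus $z$ is a supersolution at diffusion rate $1$, and a subsolution at rate $1/\tau$, namely $z_t-\frac1\tau\Delta z\le\frac1{e\tau}$. Because the rates differ, the two inequalities still have to be merged. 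The paper chooses pointwise a measurable $a\in[1,1/\tau]$ with $z_t-a\Delta z=f$ and $|f|\le\max\{\chi M/(2\tau),1/(e\tau)\}$. It then applies interior and Neumann-boundary Krylov--Safonov estimates on cylinders of fixed size; this is nondivergence form, so De Giorgi does not apply. The fixed-size cylinders are also what make the bound uniform in time. An upper bound for $v$ alone, which you flagged as the main obstacle, is not enough.

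Two smaller points in the same regime. First, the upper bound for $v$ must likewise come from $Y_\tau\le M$ directly, e.g.\ pointwise $\tau v_t+v\le M/(1-\tau)$. A Duhamel formula for the signal equation with the semigroup of $-\Delta+1$ would need control of $u$.

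Second, the interpolation you display, $\|\nabla v\|_{L^{2p}}^{2p}\lesssim\|v\|_{C^\alpha}^p\|D^2v\|_{L^p}^p$, is the borderline $\theta=\frac12$ Gagliardo--Nirenberg inequality. It already holds with $\|v\|_{L^\infty}$ and yields no smallness. You would get $\int u^p|\nabla v|^2\le\varepsilon\int u^{p+1}+C_\varepsilon\|v\|_{W^{2,p+1}}^{p+1}$ with large $C_\varepsilon$, which the fixed maximal-regularity constant cannot absorb. The H\"older bound is needed exactly to take $\theta\in(\frac12,\frac1{2-\alpha})$ in $\|\nabla v\|_{L^{2q}}\le C\|v\|_{C^\alpha}^\theta\|v\|_{W^{2,q}}^{1-\theta}$. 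Then the total homogeneity $\frac{p+2(1-\theta)}{p+1}$ drops below $1$, and Young's inequality puts small coefficients on both $\int u^{p+1}$ and $\|v\|_{W^{2,p+1}}^{p+1}$.
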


\begin{remark}[General constant coefficients]
	\label{rem:general-coefficients}
	Consider
	\[
	u_t=d_1\Delta u-\chi\nabla\!\cdot(u\nabla v)
	+\lambda u-\mu u^2,
	\qquad
	v_t=d_2\Delta v-\beta v+\alpha u,
	\]
	where \(d_1,d_2,\alpha,\beta,\chi,\mu>0\) and
	\(\lambda\in\mathbb R\). Under the change of variables
	\[
	y=\sqrt{\frac{\beta}{d_2}}\,x,
	\qquad
	s=\frac{\beta d_1}{d_2}t,
	\qquad
	U=u,
	\qquad
	V=\frac{\beta}{\alpha}v,
	\]
	the system reduces, on the rescaled domain, to \eqref{eq:system} with
	\[
	\widehat\chi=\frac{\alpha\chi}{\beta d_1},
	\qquad
	\widehat\lambda=\frac{d_2\lambda}{\beta d_1},
	\qquad
	\widehat\mu=\frac{d_2\mu}{\beta d_1},
	\qquad
	\widehat\tau=\frac{d_1}{d_2}.
	\]
	Therefore, \cref{thm:main} yields the coefficient-explicit condition
	\[
	\mu>\frac{N\alpha\chi}{4d_2}.
	\]
	When $d_1=d_2$, this reduces to the equal-diffusion criterion obtained
	by Xiang \cite{Xiang2018JMAA}; no equality between the diffusion
	coefficients is required here.
\end{remark}

The proof is built around the new auxiliary comparison function
\[
Y_\tau:=u+\frac{\chi\tau}{2}|\nabla v|^2-(\tau-1)\Delta v,
\]
which coincides with $\chi Z$ when $\tau=1$. The correction term
$-(\tau-1)\Delta v$ compensates exactly for the mismatch between the two
diffusion rates. A direct calculation, together with
$|D^2v|^2\ge N^{-1}(\Delta v)^2$, gives
\[
\begin{aligned}
	\left(\partial_t-\frac1\tau\Delta+\frac1\tau\right)Y_\tau
	&\le
	-\frac\chi N\left(\Delta v+\frac N2u\right)^2
	-\frac\chi2|\nabla v|^2
	-\delta u^2+\left(\lambda+\frac1\tau\right)u\le C_*,
\end{aligned}
\]
where $\delta$ and $C_*$ are the constants given in \cref{thm:main}.
Moreover, the convexity of $\Omega$ ensures that
$\partial_\nu Y_\tau\le0$. The comparison principle therefore yields a
uniform-in-time upper bound for $Y_\tau$ for each $\tau>0$. The above
completion of the square is precisely where the threshold
$\mu>N\chi/4$ enters the proof.

The subsequent argument splits according to the relaxation regime.
When $\tau=1$, boundedness follows immediately from $u\le Y_1$. When
$\tau>1$, the upper bound for $Y_\tau$ gives a favorable upper estimate
for $-\Delta v$, which reduces the $u$-equation to a scalar logistic
inequality. When $0<\tau<1$, the same comparison estimate first yields an
upper bound for $v$. A parabolic squeezing argument applied to
$e^{-\chi v/2}$ then gives uniform H\"older regularity of $v$.
H\"older--Sobolev interpolation, maximal $L^p$-regularity, and standard
parabolic smoothing finally yield boundedness of $u$.

The remainder of the paper is organized as follows. Section~2 collects the
local solvability and boundary properties needed in the proof. Section~3
derives the differential inequality for $Y_\tau$, which is valid for every
$\tau>0$. The regime $\tau\ge1$ is treated in Section~4, while the
parabolic squeezing and regularity arguments for $0<\tau<1$ are developed
in Section~5.

\section{Preliminaries}
In this section, we collect some useful lemmas. We first recall the local well-posedness and extensibility criterion for
classical solutions. This standard result follows from parabolic theory;
see, for instance, \cite[Lemma~1.1]{Winkler2010}.
\begin{lemma}
	\label{lem:local}
	Let $\Omega\subset\mathbb R^N$ be a bounded smooth domain, and let
	$\chi,\mu,\tau>0$ and $\lambda\in\mathbb R$. Suppose that
	\[
	u_0\in C(\overline\Omega),
	\qquad
	v_0\in W^{1,\infty}(\Omega)
	\]
	are nonnegative. Then there exist a maximal existence time
	$T_{\max}\in(0,\infty]$ and a unique nonnegative classical solution
	$(u,v)$ of \eqref{eq:system} such that
	\[
	u,v\in
	C^0\bigl(\overline\Omega\times[0,T_{\max})\bigr)
	\cap
	C^{2,1}\bigl(\overline\Omega\times(0,T_{\max})\bigr).
	\]
	Moreover, if $T_{\max}<\infty$, then
	\begin{equation}
		\label{eq:extensibility-u}
		\limsup_{t\nearrow T_{\max}}
		\|u(\cdot,t)\|_{L^\infty(\Omega)}
		=\infty.
	\end{equation}
	If $u_0\not\equiv0$, then $u>0$ in
	$\overline\Omega\times(0,T_{\max})$.
\end{lemma}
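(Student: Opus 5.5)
The plan is the standard contraction-mapping route from \cite[Lemma~1.1]{Winkler2010}, adapted to general $\tau>0$. Fix $T\in(0,1)$ and $R>\|u_0\|_{L^\infty(\Omega)}+1$. Fix also $q>N$ and $\theta\in(\tfrac12+\tfrac N{2q},1)$. Let $X$ be the closed set of $u\in C^0(\overline\Omega\times[0,T])$ with $\|u\|_{L^\infty}\le R$.

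For $u\in X$, let $v$ solve the linear problem $\tau v_t=\Delta v-v+u$ with Neumann data and $v(0)=v_0$. By the variation-of-constants formula with the semigroup $e^{t(\Delta-1)/\tau}$, the usual $L^\infty$--$W^{1,\infty}$ smoothing estimates give
\[
\|v\|_{L^\infty((0,T);W^{1,\infty}(\Omega))}\le C\bigl(\|v_0\|_{W^{1,\infty}(\Omega)}+R\bigr).
\]
Then set
\[
\Phi u(t)=e^{t\Delta}u_0-\int_0^t e^{(t-s)\Delta}\bigl[\chi\nabla\cdot(u\nabla v)-\lambda u+\mu u^2\bigr](s)\,ds.
\]
For the divergence term, use $\|e^{t\Delta}\nabla\cdot\varphi\|_{L^\infty}\le C(1+t^{-\frac12-\frac N{2q}})\|\varphi\|_{L^q}$ on $(0,1)$. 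This shows that $\Phi$ maps $X$ into itself and is a contraction once $T$ is small, with $T$ depending only on $R$, $\|u_0\|_{L^\infty}$ and $\|v_0\|_{W^{1,\infty}}$. The same estimate in fractional-power spaces $D((-\Delta+1)^\theta)$, with $\theta$ chosen as above, gives H\"older continuity of $u$ for $t>0$. Schauder theory, applied first to the $v$-equation and then to the $u$-equation written in non-divergence form, upgrades the fixed point to $C^{2,1}(\overline\Omega\times(0,T))$. Continuity up to $t=0$ follows from $u_0\in C(\overline\Omega)$ and $v_0\in W^{1,\infty}\subset C(\overline\Omega)$.

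Uniqueness comes from a Gronwall argument on the difference of two classical solutions in $L^\infty$, using the same smoothing estimates. Nonnegativity follows from the comparison principle. The $u$-equation is linear in $u$ once the coefficients $\nabla v$, $\Delta v$ and $\lambda-\mu u$ are regarded as fixed bounded functions, and $0$ is a subsolution. Then $v\ge0$ because $u\ge0$ and $v_0\ge0$. If $u_0\not\equiv0$, the strong maximum principle together with the Hopf lemma at the Neumann boundary gives $u>0$ on $\overline\Omega\times(0,T_{\max})$.

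Because the existence time depends only on $\|u_0\|_{L^\infty}$ and $\|v_0\|_{W^{1,\infty}}$, the solution can be restarted, and we take $T_{\max}$ as the supremum of the existence times. The main obstacle is the extensibility criterion \eqref{eq:extensibility-u}. The contraction argument on its own yields blow-up of $\|u\|_{L^\infty}+\|v\|_{W^{1,\infty}}$, not of $\|u\|_{L^\infty}$ alone. Suppose instead that $\|u\|_{L^\infty}\le M$ on $(0,T_{\max})$ with $T_{\max}<\infty$. Applying the variation-of-constants formula to the $v$-equation on the whole interval $(0,T_{\max})$ then bounds $\|\nabla v(t)\|_{L^\infty}$ by
\[
C\bigl(\|v_0\|_{W^{1,\infty}(\Omega)}+M\bigl(1+T_{\max}^{1/2}\bigr)\bigr).
\]
So $\|v\|_{W^{1,\infty}}$ stays bounded as well, the restart time is bounded below uniformly, and the solution extends beyond $T_{\max}$, which is a contradiction.
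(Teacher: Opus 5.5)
Your route is the one the paper relies on. The paper gives no proof of its own and defers to \cite[Lemma~1.1]{Winkler2010}, which is exactly this contraction-plus-bootstrap argument. Your handling of the extensibility criterion is also right: you recover $\sup_t\|v(t)\|_{W^{1,\infty}(\Omega)}<\infty$ from an $L^\infty$-bound on $u$ over the bounded interval $(0,T_{\max})$, so the restart time stays bounded below, and this is the correct way to reduce the blow-up alternative to $\|u\|_{L^\infty}$ alone.

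One step fails as written. You fix $\theta\in(\tfrac12+\tfrac N{2q},1)$ and claim that ``the same estimate'' in $D((-\Delta+1)^\theta)$ gives H\"older continuity of $u$. For the cross-diffusion term the available bound is $\|(-\Delta+1)^\theta e^{t\Delta}\nabla\cdot\varphi\|_{L^q(\Omega)}\le C t^{-\theta-\frac12-\varepsilon}\|\varphi\|_{L^q(\Omega)}$. Since $\theta+\tfrac12>1$ throughout your range, the Duhamel integral $\int_0^t(t-s)^{-\theta-\frac12-\varepsilon}\,ds$ diverges, and the $u$-part cannot be controlled in $D((-\Delta+1)^\theta)$ this way.

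Your range is the right one for the $v$-equation. There the source $u$ is not differentiated, the kernel is only $t^{-\theta}$, and $D((-\Delta+1)^\theta)\hookrightarrow C^{1+\beta}(\overline\Omega)$ gives H\"older continuity of $\nabla v$. For $u$ you must instead take $\theta\in(\tfrac N{2q},\tfrac12)$, which is nonempty because $q>N$, and use $D((-\Delta+1)^\theta)\hookrightarrow C^{\beta}(\overline\Omega)$.

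A smaller point concerns nonnegativity. You treat $\Delta v$ as a bounded coefficient, but it is not known to be bounded up to $t=0$, since $v_0$ is only in $W^{1,\infty}(\Omega)$ and $u$ is only continuous there. A clean fix is to test the $u$-equation with $u_-$ on $(\varepsilon,t)$. This needs only $\|\nabla v\|_{L^\infty}$ and $\|u\|_{L^\infty}$, and you can then let $\varepsilon\searrow0$ using uniform convergence $u(\varepsilon)\to u_0$. With these corrections the argument goes through.
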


We also need the following boundary properties.
\begin{lemma}
	\label{lem:boundary}
	Let $\Omega\subset\mathbb R^N$ be a bounded smooth convex domain.
	
	\begin{enumerate}
		\item
		If $\varphi\in C^3(\overline\Omega)$ satisfies
		$\partial_\nu\varphi=0$ on $\partial\Omega$, then
		\begin{equation}
			\label{eq:boundary-gradient}
			\partial_\nu |\nabla\varphi|^2\le0
			\qquad\text{on }\partial\Omega.
		\end{equation}
		
		\item
		Let $(u,v)$ be the maximal classical solution furnished by
		\cref{lem:local}. Then, for every $0<t_0<T<T_{\max}$,
		\begin{equation}
		\label{eq:boundary-laplacian}
		\partial_\nu\Delta v=0
		\qquad\text{on }\partial\Omega\times[t_0,T].
		\end{equation}
	\end{enumerate}
\end{lemma}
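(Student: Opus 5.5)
For part (1) I will use the classical identity from \cite[Lemma~4.2]{Winkler2010}, originally due to Mizoguchi--Souplet. For $\varphi\in C^2(\overline\Omega)$ with $\partial_\nu\varphi=0$ on $\partial\Omega$, one has
\[
\partial_\nu|\nabla\varphi|^2=2\nabla\varphi\cdot D^2\varphi\,\nu .
\]
To rewrite this, I represent $\partial\Omega$ locally as the zero set of a smooth defining function $\rho$, with $\nu=\nabla\rho/|\nabla\rho|$. Since $\nabla\varphi\cdot\nabla\rho=0$ on $\partial\Omega$, this expression vanishes along the boundary. Differentiating it in the tangential direction $\nabla\varphi$, which is tangential by the Neumann condition, gives
\[
D^2\varphi(\nabla\varphi,\nabla\rho)+D^2\rho(\nabla\varphi,\nabla\varphi)=0 .
\]
Consequently
\[
\partial_\nu|\nabla\varphi|^2=-\frac{2}{|\nabla\rho|}\,D^2\rho(\nabla\varphi,\nabla\varphi)=-2\,\mathrm{II}(\nabla\varphi,\nabla\varphi),
\]
where $\mathrm{II}$ is the second fundamental form. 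Convexity of $\Omega$ means $\mathrm{II}\ge0$ on tangent vectors, so the right-hand side is $\le0$. Only $C^2$ regularity up to the boundary and the $C^1$ tangential derivative of the relation $\nabla\varphi\cdot\nu=0$ are needed, so $C^3$ is more than enough.

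For part (2), I first improve the regularity of $v$ away from $t=0$. By \cref{lem:local}, $u$ is bounded on $\overline\Omega\times[t_0/2,T]$. I then bootstrap with parabolic Schauder theory under Neumann boundary conditions. The first step is to obtain $\nabla v$ Hölder continuous from $L^p$ maximal regularity. Next, $u$ is Hölder continuous via the $u$-equation in divergence form. This gives $v\in C^{2+\alpha,1+\alpha/2}$, and then $u\in C^{2+\alpha,1+\alpha/2}$ up to the boundary. One more round applied to the $v$-equation, whose right-hand side $u$ is now $C^{2+\alpha,1+\alpha/2}$ and compatible with the Neumann condition (since $\partial_\nu u=0$), gives $v\in C^{4+\alpha,2+\alpha/2}(\overline\Omega\times[t_0,T])$. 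This is done by treating $w=v_t$, which solves $\tau w_t=\Delta w-w+u_t$ with $\partial_\nu w=0$, and then using elliptic regularity for $\Delta v=\tau v_t+v-u$.

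With this regularity, I rewrite the Laplacian through the equation as $\Delta v=\tau v_t+v-u$. On the boundary, $\partial_\nu v_t=\partial_t\partial_\nu v=0$, since $\partial_\nu v\equiv0$ on $\partial\Omega\times(0,T_{\max})$ and $\nabla v_t$ is continuous up to the boundary. Also $\partial_\nu v=0$ and $\partial_\nu u=0$ there. Together these give $\partial_\nu\Delta v=0$ on $\partial\Omega\times[t_0,T]$.

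The main obstacle is the regularity bookkeeping in part (2). I need $\nabla v_t$ to exist continuously up to $\partial\Omega$, so that differentiating the boundary condition in time is legitimate. I would try to obtain this through Schauder estimates for $w=v_t$, citing standard higher-order parabolic regularity (Ladyzhenskaya--Solonnikov--Ural'tseva, or Lunardi) rather than repeating them. The time cutoff $t_0>0$ avoids any compatibility issues with the merely continuous initial data.
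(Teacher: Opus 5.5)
Your proposal is correct and follows the paper's route. For part (1) the paper only invokes the standard convexity fact, which you actually derive via the defining function and the second fundamental form. For part (2) the paper uses exactly your argument: positive-time smoothing, then $\partial_\nu v_t=\partial_t\partial_\nu v=0$, then $\Delta v=\tau v_t+v-u$ together with the Neumann conditions for $u$ and $v$.

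One small fix: the step ``$w=v_t$ solves the problem with $\partial_\nu w=0$'' assumes the identity $\partial_\nu v_t=0$ that you are trying to prove. It is cleaner to run the Schauder estimates on the time difference quotients $h^{-1}\bigl(v(\cdot,t+h)-v(\cdot,t)\bigr)$, which satisfy the Neumann condition exactly, and then let $h\to0$.
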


\begin{proof}

	The inequality \eqref{eq:boundary-gradient} is a standard consequence
	of the convexity of $\Omega$ and the homogeneous Neumann condition.
	
By standard positive-time parabolic smoothing, the solution furnished by
\cref{lem:local} is smooth on
\(\overline\Omega\times[t_0,T]\) for every
\(0<t_0<T<T_{\max}\). 	For $t>t_0$, parabolic regularity permits differentiation of
	$\partial_\nu v=0$ with respect to time, and hence
	$\partial_\nu v_t=0$. Since
	\[
	\Delta v=\tau v_t+v-u,
	\]
	the Neumann boundary conditions for $u$, $v$ and $v_t$ imply
	\eqref{eq:boundary-laplacian}. 
\end{proof}

\section{The auxiliary comparison function and its differential inequality}

We now derive the comparison estimate common to all $\tau>0$.
Let $(u,v)$ be a smooth solution of \eqref{eq:system}. In view of the positive-time smoothing mentioned in the proof
of \cref{lem:boundary}, all calculations below are justified for \(t>0\). 

Define the key auxiliary comparison function
\begin{equation}
	\label{eq:Ytau-intro}
	Y_\tau:=u+\frac{\chi\tau}{2}|\nabla v|^2-(\tau-1)\Delta v.
\end{equation}
By  \cref{lem:boundary}
\begin{equation}
	\label{eq:boundary-Y}
	\partial_\nu Y_\tau\le0
	\qquad\text{on }
	\partial\Omega\times(t_0,T).
\end{equation}
Moreover, $Y_\tau$ satisfies an evolution identity.
\begin{lemma}
\label{lem:Yidentity}
For all   \((x,t)\in\Omega\times(t_0,T)\), it holds 
\begin{equation}
\label{eq:Yidentity}
 \pa_tY_\tau-\frac1\tau\Delta Y_\tau+\frac1\tau Y_\tau
 =-\chi|\D^2v|^2-\frac\chi2|\nabla v|^2
   -\chi u\Delta v-\mu u^2
   +\left(\lambda+\frac1\tau\right)u.
\end{equation}
\end{lemma}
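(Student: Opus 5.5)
The plan is to verify \eqref{eq:Yidentity} by direct computation. We work on $\overline\Omega\times[t_0,T]$, where the solution is smooth by the positive-time smoothing used in the proof of \cref{lem:boundary}. There all fourth-order spatial derivatives and mixed time–space derivatives of $v$ exist and commute. We expand $\partial_t Y_\tau$, $-\frac1\tau\Delta Y_\tau$ and $\frac1\tau Y_\tau$ separately, substituting the equations of \eqref{eq:system} wherever a time derivative appears. We then collect terms by type: $\Delta\Delta v$, $\Delta v$, $\Delta u$, $\nabla v\cdot\nabla\Delta v$, $\nabla u\cdot\nabla v$, $|\nabla v|^2$, $u$.

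\emph{Time derivative.} We have $\partial_t u=\Delta u-\chi\nabla u\cdot\nabla v-\chi u\Delta v+\lambda u-\mu u^2$. Using $\tau v_t=\Delta v-v+u$ gives
\[
\tfrac{\chi\tau}{2}\partial_t|\nabla v|^2=\chi\nabla v\cdot\nabla\Delta v-\chi|\nabla v|^2+\chi\nabla u\cdot\nabla v ,
\]
and
\[
-(\tau-1)\Delta v_t=-\tfrac{\tau-1}{\tau}\bigl(\Delta\Delta v-\Delta v+\Delta u\bigr).
\]
The factor $\tau$ in front of $|\nabla v|^2$ in $Y_\tau$ is chosen exactly so that $\tau v_t$ appears and cancels the $1/\tau$.

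\emph{Diffusion and zeroth-order parts.} For $-\frac1\tau\Delta Y_\tau$ we use the Bochner identity $\frac12\Delta|\nabla v|^2=|\D^2v|^2+\nabla v\cdot\nabla\Delta v$. This gives
\[
-\tfrac1\tau\Delta Y_\tau=-\tfrac1\tau\Delta u-\chi|\D^2v|^2-\chi\nabla v\cdot\nabla\Delta v+\tfrac{\tau-1}{\tau}\Delta\Delta v.
\]
Finally,
\[
\tfrac1\tau Y_\tau=\tfrac1\tau u+\tfrac\chi2|\nabla v|^2-\tfrac{\tau-1}{\tau}\Delta v.
\]

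\emph{Bookkeeping.} Summing, the $\Delta\Delta v$ terms cancel, which is the point of the correction $-(\tau-1)\Delta v$. The $\Delta v$ terms $\pm\frac{\tau-1}{\tau}\Delta v$ cancel. The $\Delta u$ coefficients give $1-\frac{\tau-1}{\tau}-\frac1\tau=0$. The terms $\pm\chi\nabla v\cdot\nabla\Delta v$ cancel, and so do $\pm\chi\nabla u\cdot\nabla v$. What remains is
\[
-\chi|\D^2v|^2-\chi|\nabla v|^2+\tfrac\chi2|\nabla v|^2-\chi u\Delta v-\mu u^2+\lambda u+\tfrac1\tau u ,
\]
which is exactly \eqref{eq:Yidentity}.

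There is no genuine obstacle here. The only points requiring care are the regularity needed to differentiate $\Delta v$ in time and to apply Bochner's identity, which is supplied by smoothness on $[t_0,T]$, and the careful tracking of the signs of the $(\tau-1)/\tau$ cancellations.
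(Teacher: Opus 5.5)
Your computation is correct and is essentially the paper's proof. Both expand $\partial_t Y_\tau$ via the equations, apply the Bochner identity, and rely on the same cancellations of the $\Delta^2 v$, $\Delta u$, $\Delta v$, $\nabla v\cdot\nabla\Delta v$ and $\nabla u\cdot\nabla v$ terms. The only difference is cosmetic: the paper first writes an evolution equation for $q=|\nabla v|^2/2$, whereas you apply Bochner directly inside $\frac1\tau\Delta Y_\tau$.
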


\begin{proof}
Set $q=|\nabla v|^2/2$. Taking the scalar product of the gradient of the
second equation in \eqref{eq:system} with $\nabla v$ and using the Bochner
identity gives
\begin{equation}
\label{eq:q-evolution}
 \tau q_t
 =\Delta q-|\D^2v|^2-|\nabla v|^2+\nabla u\cdot\nabla v.
\end{equation}
Moreover,
\begin{equation}
\label{eq:Deltav-evolution}
 \tau(\Delta v)_t=\Delta^2v-\Delta v+\Delta u,
\end{equation}
and the first equation can be written as
\begin{equation}
\label{eq:u-expanded}
 u_t=\Delta u-\chi\nabla u\cdot\nabla v-\chi u\Delta v
     +\lambda u-\mu u^2.
\end{equation}
As a result,
\begin{align*}
 \pa_tY_\tau
 ={}&\Delta u-\chi\nabla u\cdot\nabla v-\chi u\Delta v
      +\lambda u-\mu u^2\\
 &+\chi\Delta q-\chi|\D^2v|^2-\chi|\nabla v|^2
      +\chi\nabla u\cdot\nabla v\\
 &-\frac{\tau-1}{\tau}
      \bigl(\Delta^2v-\Delta v+\Delta u\bigr),
\end{align*}
whereas
\[
 \frac1\tau\Delta Y_\tau
 =\frac1\tau\Delta u+\chi\Delta q
   -\frac{\tau-1}{\tau}\Delta^2v.
\]
 Thus
\[
 \pa_tY_\tau-\frac1\tau\Delta Y_\tau
 =-\chi|\D^2v|^2-\chi|\nabla v|^2-\chi u\Delta v
  -\mu u^2+\lambda u+\frac{\tau-1}{\tau}\Delta v,
\]
which gives rise to \eqref{eq:Yidentity}  by adding $\tau^{-1}Y_\tau$ to both sides.
\end{proof}
We arrive at a closed scalar differential inequality.
\begin{lemma}
\label{lem:Yscalar}
Under \eqref{eq:threshold}, $Y_\tau$ satisfies
\begin{equation}
\label{eq:Yscalar}
 \pa_tY_\tau-\frac1\tau\Delta Y_\tau+\frac1\tau Y_\tau
 \le C_*,
\end{equation}
where $C_*$ is defined in \eqref{eq:deltaCstar}.
\end{lemma}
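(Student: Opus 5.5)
The plan is to start from the identity \eqref{eq:Yidentity} of \cref{lem:Yidentity} and bound its right-hand side pointwise by $C_*$. The only term without a definite sign is the cross term $-\chi u\Delta v$. It will be absorbed using the Hessian term together with part of the logistic damping.

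First, I will apply the elementary inequality $|\D^2v|^2\ge \frac1N(\Delta v)^2$, which is Cauchy--Schwarz on the diagonal entries of $\D^2 v$. This gives
\[
-\chi|\D^2v|^2-\chi u\Delta v
\le -\frac{\chi}{N}(\Delta v)^2-\chi u\Delta v .
\]
Completing the square in $\Delta v$ then yields
\[
-\frac{\chi}{N}(\Delta v)^2-\chi u\Delta v
=-\frac{\chi}{N}\Bigl(\Delta v+\frac N2u\Bigr)^2+\frac{N\chi}{4}u^2 .
\]
Combined with $-\mu u^2$, this leaves $-\delta u^2$ with $\delta=\mu-N\chi/4>0$ by \eqref{eq:threshold}. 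This completion of the square is the only place where the threshold enters, and I expect it to be the crux: the constant $N\chi/4$ is exactly what the trace inequality forces.

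Next, I will discard the nonpositive terms $-\frac{\chi}{N}(\Delta v+\frac N2 u)^2$ and $-\frac\chi2|\nabla v|^2$. What remains is the scalar function $g(u)=-\delta u^2+(\lambda+\tau^{-1})u$ of $u\ge0$; nonnegativity of $u$ comes from \cref{lem:local}. If $\lambda+\tau^{-1}\le0$, then $g\le0=C_*$. Otherwise $g$ is maximized at $u=(\lambda+\tau^{-1})/(2\delta)$, with maximum value $(\lambda+\tau^{-1})^2/(4\delta)$. In both cases $g(u)\le (\lambda+\tau^{-1})_+^2/(4\delta)=C_*$, and this gives \eqref{eq:Yscalar} on $\Omega\times(t_0,T)$, for every $0<t_0<T<T_{\max}$, wherever the identity holds.
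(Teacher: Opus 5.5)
Your proposal is correct and follows the paper's proof essentially verbatim. Both use the trace inequality $|\D^2v|^2\ge N^{-1}(\Delta v)^2$, complete the square to leave $-\delta u^2$, and then maximize $-\delta u^2+(\lambda+\tau^{-1})u$ over $u\ge0$ to obtain $C_*$; your case split on the sign of $\lambda+\tau^{-1}$ simply spells out a step the paper compresses into one inequality.
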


\begin{proof}
Applying the matrix inequality $|\D^2v|^2\ge N^{-1}(\Delta v)^2$, one finds that
\begin{align}
 -\chi|\D^2v|^2-\chi u\Delta v-\mu u^2
 &\le-\frac\chi N(\Delta v)^2-\chi u\Delta v-\mu u^2\notag\\
 &=-\frac\chi N\left(\Delta v+\frac N2u\right)^2
   -\left(\mu-\frac{N\chi}{4}\right)u^2\notag\\
 &\le-\delta u^2.
\label{eq:squarecompletion}
\end{align}
Consequently, \cref{lem:Yidentity} and nonnegativity of $u$ yield
\[
 \pa_tY_\tau-\frac1\tau\Delta Y_\tau+\frac1\tau Y_\tau
 \le-\delta u^2+\left(\lambda+\frac1\tau\right)u
 \le\frac{\pos{\lambda+\tau^{-1}}^2}{4\delta}=C_*.
\]
This completes the proof.
\end{proof}
Combining this differential inequality with the boundary condition
\eqref{eq:boundary-Y} yields the desired comparison estimate.
\begin{proposition}
\label{prop:Ybound}
Let $0<t_0<T<T_{\max}$. Then
\begin{equation}
\label{eq:Ybound}
 Y_\tau(x,t)
 \le e^{-(t-t_0)/\tau}
       \max_{\overline\Om}\pos{Y_\tau(\cdot,t_0)}
     +\tau C_*\bigl(1-e^{-(t-t_0)/\tau}\bigr)
\end{equation}
for all $(x,t)\in\overline\Om\times[t_0,T]$. In particular, if the solution
is global, then
\begin{equation}
\label{eq:Ylimsup}
 \limsup_{t\to\infty}\sup_{x\in\Om}Y_\tau(x,t)\le\tau C_*.
\end{equation}
\end{proposition}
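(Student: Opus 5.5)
The plan is a direct comparison argument for the linear operator $\mathcal L:=\partial_t-\frac1\tau\Delta+\frac1\tau$ on $\overline\Om\times[t_0,T]$. First we fix the supersolution. Set
\[
 M_0:=\max_{\overline\Om}\pos{Y_\tau(\cdot,t_0)},\qquad
 \overline Y(t):=e^{-(t-t_0)/\tau}M_0+\tau C_*\bigl(1-e^{-(t-t_0)/\tau}\bigr).
\]
This function is spatially constant, so $\partial_\nu\overline Y=0$ and $\Delta\overline Y=0$. A direct computation gives $\overline Y'+\frac1\tau\overline Y=C_*$, hence $\mathcal L\overline Y=C_*$. Moreover, $\overline Y(t_0)=M_0\ge Y_\tau(\cdot,t_0)$.

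Next, let $W:=Y_\tau-\overline Y$. By \cref{lem:Yscalar} we have $\mathcal L W\le0$ in $\Om\times(t_0,T)$. By \eqref{eq:boundary-Y} we have $\partial_\nu W\le0$ on $\partial\Om\times(t_0,T)$, and $W(\cdot,t_0)\le0$. The regularity needed to apply the maximum principle is available. By the positive-time smoothing in the proof of \cref{lem:boundary}, $u$ and $v$ are smooth on $\overline\Om\times[t_0,T]$. Hence $Y_\tau$, which involves up to second derivatives of $v$, is continuous on $\overline\Om\times[t_0,T]$ and $C^{2,1}$ in $\overline\Om\times(t_0,T]$. Since the parabolic cylinder starts at $t_0>0$, we can take $t_0$ itself as initial time without trouble.

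The key step is the weak maximum principle with Neumann-type boundary inequality. We argue by contradiction with a perturbation: consider $W_\varepsilon:=W-\varepsilon e^{t}$, or more simply $W-\varepsilon(1+t-t_0)$. This gives $\mathcal LW_\varepsilon<0$, because $\mathcal L$ applied to a positive increasing function of $t$ is positive. Suppose $W_\varepsilon$ attains a positive maximum over $\overline\Om\times[t_0,T]$ at some $(x_1,t_1)$. Then $t_1>t_0$. If $x_1\in\Om$, then $\partial_t W_\varepsilon\ge0$, $\Delta W_\varepsilon\le0$ and $W_\varepsilon>0$, so $\mathcal LW_\varepsilon\ge0$, a contradiction.

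If $x_1\in\partial\Om$, we only have $\partial_\nu W_\varepsilon\le0$. This is not strict, so the Hopf lemma is the delicate point and the main obstacle. To handle it, I would first try to strengthen the barrier. Add $\varepsilon\psi(x)$, where $\psi$ is smooth with $\partial_\nu\psi>0$ on $\partial\Om$, for instance $\psi$ a smooth extension of the distance-type function $-d(x)$ near the boundary. Then subtract $\varepsilon(\psi+K(t-t_0)+1)$, with $K$ large enough that $\mathcal L$ of this correction stays positive despite $\Delta\psi$. The perturbed function then has $\partial_\nu W_\varepsilon<0$ on the boundary. This excludes a boundary maximum, since at a boundary maximum $\partial_\nu\ge0$. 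Alternatively, one can invoke the Hopf boundary lemma for the uniformly parabolic operator $\mathcal L$ directly. Letting $\varepsilon\to0$ then gives $W\le0$, which is \eqref{eq:Ybound}.

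Finally, for \eqref{eq:Ylimsup}, assume global existence and fix any $t_0>0$. Apply \eqref{eq:Ybound} for all $T>t_0$. As $t\to\infty$, the term $e^{-(t-t_0)/\tau}M_0$ tends to $0$ and the second term tends to $\tau C_*$. This yields $\limsup_{t\to\infty}\sup_x Y_\tau\le\tau C_*$.
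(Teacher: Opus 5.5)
Your proof is correct and follows the paper's route. The paper simply invokes the parabolic comparison principle for $\partial_t-\tau^{-1}\Delta+\tau^{-1}$ with the boundary inequality $\partial_\nu Y_\tau\le0$. Your spatially constant ODE supersolution, together with the barrier $\varepsilon(\psi+K(t-t_0)+1)$ that makes the boundary inequality strict, is an explicit proof of that comparison step. One small normalization: choose $\psi\ge0$, for instance by shifting $-d$ by a constant, so that $W_\varepsilon(\cdot,t_0)\le0$.
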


\begin{proof}
	By \eqref{eq:boundary-Y} and \cref{lem:Yscalar}, we have
	\[
	\partial_t Y_\tau-\frac1\tau\Delta Y_\tau
	+\frac1\tau Y_\tau\le C_*
	\quad\text{in }\Omega\times(t_0,T),
	\]
	and
	\[
	\partial_\nu Y_\tau\le0
	\quad\text{on }\partial\Omega\times(t_0,T).
	\]
 The parabolic comparison principle therefore yields
	\eqref{eq:Ybound}. If the solution is global, then letting
	\(t\to\infty\) gives \eqref{eq:Ylimsup}.
\end{proof}
For later use, fix $t_0>0$ and write
\begin{equation}
\label{eq:M-t0}
 M_{t_0}:=
 \max\left\{
 \max_{\overline\Om}\pos{Y_\tau(\cdot,t_0)},\ \tau C_*
 \right\}.
\end{equation}
Then
\begin{equation}
\label{eq:Y-M}
 Y_\tau(x,t)\le M_{t_0}
 \qquad(x,t)\in\overline\Om\times[t_0,T_{\max}).
\end{equation}

\section{The regime \texorpdfstring{$\tau\ge1$}{tau >= 1}}

For $\tau\ge1$, the upper bound for $Y_\tau$ leads to pointwise control
of the density: directly when $\tau=1$, and through a scalar logistic
comparison when $\tau>1$.

\begin{lemma}
\label{lem:logistic-comparison}
Let $\tau>1$ and suppose that $Y_\tau\le M$ on
$\Om\times[t_0,T)$. Then
\begin{equation}
\label{eq:minusDeltav}
 -\Delta v
 \le\frac{M}{\tau-1}-\frac{u}{\tau-1}
     -\frac{\chi\tau}{2(\tau-1)}|\nabla v|^2
\end{equation}
and
\begin{equation}
\label{eq:u-logistic-PDE}
 u_t
 \le\Delta u-\chi\nabla v\cdot\nabla u
 +\left(\lambda+\frac{\chi M}{\tau-1}\right)u
 -\left(\mu+\frac\chi{\tau-1}\right)u^2.
\end{equation}
Consequently,
\begin{equation}
	\label{eq:u-explicit-M}
	\sup_{t\in[t_0,T)}\norm{u(t)}_{L^\infty}
	\le
	\max\left\{
	\norm{u(t_0)}_{L^\infty},
	\frac{\pos{(\tau-1)\lambda+\chi M}}
	{(\tau-1)\mu+\chi}
	\right\}.
\end{equation}
\end{lemma}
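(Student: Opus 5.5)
The plan is to read off \eqref{eq:minusDeltav} directly from the definition \eqref{eq:Ytau-intro} of $Y_\tau$, substitute it into the expanded $u$-equation \eqref{eq:u-expanded}, and then compare $u$ with a spatially homogeneous logistic ODE solution.

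\textbf{Step 1.} Since $\tau>1$, the bound $Y_\tau\le M$ reads
\[
u+\frac{\chi\tau}{2}|\nabla v|^2-(\tau-1)\Delta v\le M .
\]
Dividing by $\tau-1>0$ and rearranging gives exactly \eqref{eq:minusDeltav}.

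\textbf{Step 2.} In \eqref{eq:u-expanded} the only problematic term is $-\chi u\Delta v$. Because $u\ge0$, I can multiply \eqref{eq:minusDeltav} by $\chi u$, which yields
\[
-\chi u\Delta v\le \frac{\chi M}{\tau-1}u-\frac{\chi}{\tau-1}u^2-\frac{\chi^2\tau}{2(\tau-1)}u|\nabla v|^2 .
\]
The last term is nonpositive and will simply be dropped. Inserting this into \eqref{eq:u-expanded} gives \eqref{eq:u-logistic-PDE}.

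\textbf{Step 3.} Write \eqref{eq:u-logistic-PDE} as
\[
u_t\le \Delta u-\chi\nabla v\cdot\nabla u+f(u),\qquad f(s)=as-bs^2,
\]
with $a=\lambda+\chi M/(\tau-1)$ and $b=\mu+\chi/(\tau-1)>0$. The linear operator $\Delta-\chi\nabla v\cdot\nabla$ has smooth (hence bounded) coefficients on $\overline\Omega\times[t_0,T']$ for every $T'<T$, by positive-time smoothing. The boundary condition is $\partial_\nu u=0$.

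I compare $u$ with the constant $\bar U:=\max\{\|u(t_0)\|_{L^\infty},\,a_+/b\}$. Multiplying the numerator and denominator by $\tau-1$ shows that $a_+/b$ is precisely the second entry in \eqref{eq:u-explicit-M}. For $s\ge a_+/b$ we have $f(s)\le0$, so $\bar U$ is a supersolution. The comparison principle for this semilinear problem applies, since $f$ is locally Lipschitz and both functions are bounded on compact time intervals. It gives $u\le\bar U$ on $[t_0,T')$ for every $T'<T$, and hence \eqref{eq:u-explicit-M}.

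An alternative to the comparison principle is a maximum-point argument on $w=e^{-t}(u-\bar U)$. At an interior or boundary maximum, the Neumann condition and the Hopf lemma rule out a positive maximum.

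The argument is essentially routine. The only point needing care is the use of $u\ge0$ when multiplying \eqref{eq:minusDeltav} by $\chi u$, together with justifying the comparison with bounded coefficients near $t_0$ (which is handled by working on $[t_0,T']$).
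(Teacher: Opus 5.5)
Your proposal is correct and follows the paper's proof. You read off \eqref{eq:minusDeltav} from $Y_\tau\le M$, multiply by $\chi u\ge0$ and discard the nonpositive term $-\chi^2\tau u|\nabla v|^2/[2(\tau-1)]$ to get \eqref{eq:u-logistic-PDE}, and compare $u$ with a spatially homogeneous supersolution. The only cosmetic difference is the supersolution: the paper uses the logistic ODE solution $U$ with $U(t_0)=\|u(t_0)\|_{L^\infty}$, which also gives the asymptotic bound toward $a_+/b$ reused in \cref{prop:tau-geq1}, whereas your constant $\bar U=\max\{\|u(t_0)\|_{L^\infty},a_+/b\}$ gives the stated supremum bound directly.
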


\begin{proof}
The estimate \eqref{eq:minusDeltav} follows directly from
$Y_\tau=u+(\chi\tau/2)|\nabla v|^2-(\tau-1)\Delta v\le M$. Inserting it
into
\[
 u_t=\Delta u-\chi\nabla v\cdot\nabla u
      +\chi u(-\Delta v)+\lambda u-\mu u^2
\]
and discarding the additional nonpositive term
$-\chi^2\tau u|\nabla v|^2/[2(\tau-1)]$ gives
\eqref{eq:u-logistic-PDE}.

Let $U$ solve
\[
 U'=aU-bU^2,
 \qquad U(t_0)=\norm{u(t_0)}_{L^\infty},
\]
with
\[
 a=\lambda+\frac{\chi M}{\tau-1},
 \qquad
 b=\mu+\frac\chi{\tau-1}>0.
\]
The spatially constant function $U$ is a supersolution of
\eqref{eq:u-logistic-PDE}. The parabolic comparison principle and the
standard logistic bound prove \eqref{eq:u-explicit-M}.
\end{proof}
We can now complete the argument for $\tau\ge1$, including the explicit
ultimate bounds.
\begin{proposition}
	\label{prop:tau-geq1}
	Under the assumptions of \cref{thm:main}, suppose that \(\tau\ge1\).
	Then the solution is global and satisfies
	\[
	\sup_{t>0}
	\left(
	\|u(t)\|_{L^\infty(\Omega)}
	+\|v(t)\|_{W^{1,\infty}(\Omega)}
	\right)<\infty.
	\]
	Moreover, if \(\tau>1\), then
	\begin{equation}
		\label{eq:prop-explicitabsorbing}
		\limsup_{t\to\infty}\|u(t)\|_{L^\infty(\Omega)}
		\le
		\frac{\pos{(\tau-1)\lambda+\chi \tau C_*}}
		{(\tau-1)\mu+\chi},
	\end{equation}
	whereas, if \(\tau=1\), then
	\begin{equation}
		\label{eq:prop-tau1absorbing}
		\limsup_{t\to\infty}\|u(t)\|_{L^\infty(\Omega)}
		\le C_*.
	\end{equation}
\end{proposition}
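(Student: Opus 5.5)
Fix $t_0\in(0,T_{\max})$ and let $M_{t_0}$ be as in \eqref{eq:M-t0}, so that \eqref{eq:Y-M} holds on $[t_0,T_{\max})$. The first task is a uniform bound for $u$ on $[t_0,T_{\max})$; on $[0,t_0]$ boundedness follows from continuity of the classical solution given by \cref{lem:local}.

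\emph{Case $\tau=1$.} Here $Y_1=u+\frac{\chi}{2}|\nabla v|^2$, so $0\le u\le Y_1\le M_{t_0}$.

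\emph{Case $\tau>1$.} Apply \cref{lem:logistic-comparison} with $M=M_{t_0}$ and $T$ arbitrary; then \eqref{eq:u-explicit-M} gives a bound independent of $T$.

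In both cases $\|u\|_{L^\infty}$ stays bounded on $[0,T_{\max})$, so the extensibility criterion \eqref{eq:extensibility-u} forces $T_{\max}=\infty$.

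For the $W^{1,\infty}$ bound on $v$, note that $u\in L^\infty(\Omega\times(0,\infty))$. The comparison principle applied to $\tau v_t=\Delta v-v+u$ gives $0\le v\le\max\{\|v_0\|_{L^\infty},\sup u\}$. For the gradient there are two routes.
\begin{itemize}
\item When $\tau\ge1$ one can read it off $Y_\tau$: since $\tau=1$ gives $\frac{\chi}{2}|\nabla v|^2\le Y_1\le M_{t_0}$, the case $\tau=1$ is immediate.
\item In general (in particular for $\tau>1$, where $Y_\tau$ contains $-(\tau-1)\Delta v$ with an unfavorable sign for reading off $|\nabla v|$), use the standard Neumann heat semigroup estimate. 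Write
\[
v(t)=e^{(t/\tau)(\Delta-1)}v_0+\frac1\tau\int_0^t e^{((t-s)/\tau)(\Delta-1)}u(s)\,ds .
\]
Then $\|\nabla e^{\sigma\Delta}w\|_{L^\infty}\le C(1+\sigma^{-1/2})\|w\|_{L^\infty}$ and $\|\nabla e^{\sigma\Delta}v_0\|_{L^\infty}\le C\|\nabla v_0\|_{L^\infty}$. Together with the exponential factor $e^{-\sigma}$, the integral $\int_0^\infty(1+\sigma^{-1/2})e^{-\sigma}\,d\sigma$ is finite, which yields $\sup_t\|\nabla v(t)\|_{L^\infty}<\infty$.
\end{itemize}

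For the ultimate bounds, let $\varepsilon>0$. By \eqref{eq:Ylimsup} there is $t_1$ with $Y_\tau\le\tau C_*+\varepsilon$ on $\Omega\times[t_1,\infty)$.
\begin{itemize}
\item If $\tau=1$, then $u\le Y_1$ gives $\limsup\|u\|_{L^\infty}\le C_*+\varepsilon$.
\item If $\tau>1$, apply \cref{lem:logistic-comparison} from $t_1$ with $M=\tau C_*+\varepsilon$. Rather than using \eqref{eq:u-explicit-M} directly, use the logistic ODE supersolution $U$ from its proof. Since $b>0$, $U(t)\to a_+/b$ as $t\to\infty$, so
\[
\limsup\|u\|_{L^\infty}\le\frac{\bigl((\tau-1)\lambda+\chi(\tau C_*+\varepsilon)\bigr)_+}{(\tau-1)\mu+\chi}.
\]
Here one should check that $U$ stays positive and bounded, which holds since $U(t_1)\ge0$.
\end{itemize}
Letting $\varepsilon\to0$ gives \eqref{eq:prop-explicitabsorbing} and \eqref{eq:prop-tau1absorbing}.

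The main technical point is the gradient bound for $v$ when $\tau>1$. I expect the semigroup argument to handle it routinely. An alternative is to use the regularity $v_0\in W^{1,\infty}$ together with a uniform-in-time bound via the exponential decay $e^{-t/\tau}$ of the semigroup.
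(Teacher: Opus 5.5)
Your proposal is correct and follows essentially the same route as the paper: $u\le Y_1$ for $\tau=1$; \cref{lem:logistic-comparison}, first with $M=M_{t_0}$ and then with $M=\tau C_*+\varepsilon$, for $\tau>1$; the extensibility criterion; and Neumann heat semigroup estimates for $\|v\|_{W^{1,\infty}}$, which you spell out in more detail than the paper does. One small point of precision: the claim $U(t)\to a_+/b$ fails when $U(t_1)=0$ (then $U\equiv0$), but you only need $\limsup_{t\to\infty}U(t)\le a_+/b$, and that holds in every case.
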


\begin{proof}
	Assume first that \(\tau>1\). Fix
	\(t_0\in(0,T_{\max})\), and let \(M_{t_0}\) be defined by
	\eqref{eq:M-t0}. By \eqref{eq:Y-M},
	\[
	Y_\tau(x,t)\le M_{t_0}
	\qquad
	\text{for all }
	(x,t)\in\overline\Omega\times[t_0,T_{\max}).
	\]
	Applying \cref{lem:logistic-comparison} we obtain \eqref{eq:u-explicit-M} with \(M\) being replaced by \(M_{t_0}\), which together with the local boundedness of \(u\) on \([0,t_0]\), yields
	\begin{equation}
		\label{eq:u-bound-premax}
		\sup_{t\in(0,T_{\max})}
		\|u(t)\|_{L^\infty(\Omega)}<\infty.
	\end{equation}
	The extensibility criterion in \cref{lem:local} therefore implies
	\(T_{\max}=\infty\).
	
We next prove \eqref{eq:prop-explicitabsorbing}. Let \(\varepsilon>0\).
By \eqref{eq:Ylimsup}, there exists \(T_\varepsilon>0\) such that
\[
Y_\tau(x,t)\le\tau C_*+\varepsilon
\qquad
\text{for all }(x,t)\in
\overline\Omega\times[T_\varepsilon,\infty).
\]
Applying \cref{lem:logistic-comparison} with
\(M=\tau C_*+\varepsilon\), and comparing with the corresponding
scalar logistic equation starting from
\(\|u(T_\varepsilon)\|_{L^\infty(\Omega)}\), we obtain
\[
\limsup_{t\to\infty}\|u(t)\|_{L^\infty(\Omega)}
\le
\frac{\bigl((\tau-1)\lambda+\chi(\tau C_*+\varepsilon)\bigr)_+}
{(\tau-1)\mu+\chi}.
\]
Letting \(\varepsilon\searrow0\) proves
\eqref{eq:prop-explicitabsorbing}.
	
	Suppose now that \(\tau=1\). In this case,
	\begin{equation}
		\label{eq:Ytau1}
		Y_1
		=
		u+\frac{\chi}{2}|\nabla v|^2
		\ge u.
	\end{equation}
	Fixing \(t_0\in(0,T_{\max})\), we infer from
	\cref{prop:Ybound} that \(Y_1\), and hence \(u\), is uniformly bounded
	on \(\Omega\times[t_0,T_{\max})\). Together with local boundedness on
	\([0,t_0]\), this again gives \eqref{eq:u-bound-premax}, and therefore
	\(T_{\max}=\infty\). Moreover,	\eqref{eq:prop-tau1absorbing} follows
	immediately from \eqref{eq:Ytau1} and \eqref{eq:Ylimsup}.
	
	Finally, in both cases \(\tau\ge1\), the uniform \(L^\infty\)-bound for
	\(u\) together with the Neumann heat semigroup estimates yields
	\[
	\sup_{t>0}\|v(t)\|_{W^{1,\infty}(\Omega)}<\infty.
	\]
	This completes the proof.
\end{proof}

\section{The regime \texorpdfstring{$0<\tau<1$}{0<tau<1}}

Throughout this section, let \(0<\tau<1\), fix
\(t_0\in(0,T_{\max})\), and set
\[
M:=M_{t_0},
\]
where \(M_{t_0}\) is defined in \eqref{eq:M-t0}.

\subsection{Pointwise control and parabolic squeezing}
When $0<\tau<1$, the sign of the Laplacian term in $Y_\tau$ no longer
gives direct control of $u$, but it does yield a pointwise upper bound
for the signal.
\begin{lemma}
	\label{lem:v-upper}
	We have
	\begin{equation}
		\label{eq:v-uniform-upper}
		0\le v(x,t)\le v^*
		:=
		\max\left\{
		\|v(t_0)\|_{L^\infty(\Omega)},
		\frac{M}{1-\tau}
		\right\}
	\end{equation}
	for all
	\((x,t)\in\overline\Omega\times[t_0,T_{\max})\).
\end{lemma}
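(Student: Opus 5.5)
The plan is to turn the bound \eqref{eq:Y-M} into a scalar parabolic inequality for $v$ alone, and then conclude by comparison with a constant supersolution. Nonnegativity of $v$ is immediate: $u\ge0$, $v_0\ge0$, and the $v$-equation is linear with a nonnegative source, so the parabolic comparison principle (Neumann case) applies.

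For the upper bound, I would begin from \eqref{eq:Y-M}. Since $\tau<1$ we have $-(\tau-1)=1-\tau>0$, so \eqref{eq:Y-M} reads
\[
u+\frac{\chi\tau}{2}|\nabla v|^2+(1-\tau)\Delta v\le M
\qquad\text{on }\overline\Omega\times[t_0,T_{\max}).
\]
Dropping the nonnegative gradient term gives $(1-\tau)\Delta v\le M-u$. Substituting this into the second equation of \eqref{eq:system}, I would write
\[
\tau v_t=\Delta v-v+u\le \frac{M-u}{1-\tau}-v+u
=\frac{M}{1-\tau}-v-\frac{\tau}{1-\tau}u\le \frac{M}{1-\tau}-v .
\]
This is a pure ODE-type inequality: at each fixed $x$, $\tau v_t\le \frac{M}{1-\tau}-v$ on $[t_0,T_{\max})$. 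The key point is that no spatial derivative survives in this inequality, so I do not even need a boundary argument.

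Integrating pointwise in time then gives
\[
v(x,t)\le e^{-(t-t_0)/\tau}v(x,t_0)+\frac{M}{1-\tau}\bigl(1-e^{-(t-t_0)/\tau}\bigr),
\]
and the right-hand side is a convex combination of $v(x,t_0)\le\|v(t_0)\|_{L^\infty}$ and $M/(1-\tau)$. It is therefore at most $v^*$, which proves \eqref{eq:v-uniform-upper}. Smoothness of $v$ on $[t_0,T]$ for $T<T_{\max}$, which is guaranteed by the positive-time smoothing of \cref{lem:boundary}, justifies the pointwise computation, including at $t=t_0$.

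I do not expect a serious obstacle here. The only delicate point is making sure the substitution uses the correct sign of $1-\tau$, and noting that the $u$ contributions combine into $-\tau u/(1-\tau)\le0$ rather than leaving an uncontrolled $u$ term.
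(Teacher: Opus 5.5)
Your proposal is correct and follows essentially the same route as the paper's proof. Like the paper, you use \eqref{eq:Y-M} with $1-\tau>0$ to bound $\Delta v$ from above, substitute this into the signal equation to get $\tau v_t+v\le M/(1-\tau)$, and conclude by pointwise ODE comparison; the only cosmetic difference is that you drop the gradient term before substituting rather than after.
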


\begin{proof}
	By \eqref{eq:Y-M},
	\[
	u+\frac{\chi\tau}{2}|\nabla v|^2
	+(1-\tau)\Delta v\le M,
	\]
	which together with \(\tau v_t+v=\Delta v+u\),  yields
	\[
	\tau v_t+v
	\le
	\frac{M}{1-\tau}
	-\frac{\tau}{1-\tau}u
	-\frac{\chi\tau}{2(1-\tau)}|\nabla v|^2
	\le\frac{M}{1-\tau}.
	\]
	Pointwise comparison with the corresponding scalar ordinary
	differential equation gives
	\[
	v(x,t)
	\le
	e^{-(t-t_0)/\tau}\|v(t_0)\|_{L^\infty(\Omega)}
	+\frac{M}{1-\tau}
	\bigl(1-e^{-(t-t_0)/\tau}\bigr)
	\le v^*.
	\]
	The lower bound follows from \(v\ge0\).
\end{proof}

Define
\begin{equation}
	\label{eq:zdef}
	z:=e^{-\chi v/2}.
\end{equation}

\begin{lemma}
	\label{lem:z-squeezing}
	The function \(z\) satisfies
	\[
	0<z\le1,
	\qquad
	\partial_\nu z=0
	\quad\text{on }\partial\Omega\times(t_0,T_{\max}),
	\]
	and 
	\begin{equation}
		\label{eq:z-lowerheat}
		z_t-\Delta z\ge-C_1,
		\qquad
		C_1:=\frac{\chi M}{2\tau},
	\end{equation}
	as well as
	\begin{equation}
		\label{eq:z-upperheat}
		z_t-\frac1\tau\Delta z\le C_2,
		\qquad
		C_2:=\frac1{e\tau},
	\end{equation}	in \(\Omega\times(t_0,T_{\max})\).
\end{lemma}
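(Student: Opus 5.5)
The plan is to compute the parabolic operators applied to \(z=e^{-\chi v/2}\) directly from the second equation of \eqref{eq:system}, and to control the Laplacian term using the pointwise bound \eqref{eq:Y-M} for \(Y_\tau\). The elementary facts come first. Since \(v\ge0\) by \cref{lem:local}, we have \(0<z\le1\). Moreover \(\nabla z=-\frac\chi2 z\nabla v\), so \(\partial_\nu z=-\frac\chi2 z\,\partial_\nu v=0\) on \(\partial\Omega\). All computations are justified for \(t>t_0\) by the positive-time smoothing used in \cref{lem:boundary}.

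Next I record the chain-rule identities
\[
z_t=-\frac{\chi}{2\tau}z\,(\Delta v-v+u),
\qquad
\Delta z=-\frac\chi2 z\Delta v+\frac{\chi^2}{4}z|\nabla v|^2,
\]
where the first uses \(\tau v_t=\Delta v-v+u\).

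For the upper inequality \eqref{eq:z-upperheat}, the \(\Delta v\) contributions cancel exactly in \(z_t-\frac1\tau\Delta z\). What remains is
\[
z_t-\frac1\tau\Delta z=\frac{\chi}{2\tau}zv-\frac{\chi}{2\tau}zu-\frac{\chi^2}{4\tau}z|\nabla v|^2 .
\]
Dropping the two nonpositive terms leaves \(\frac1\tau\, s e^{-s}\) with \(s=\chi v/2\ge0\). Since \(se^{-s}\le e^{-1}\), this is at most \(1/(e\tau)=C_2\).

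For the lower inequality \eqref{eq:z-lowerheat}, the \(\Delta v\) terms in \(z_t-\Delta z\) combine to \(-\frac{\chi}{2\tau}(1-\tau)z\Delta v\). This is where \(0<\tau<1\) and \eqref{eq:Y-M} enter. Indeed, \(Y_\tau\le M\) reads
\[
(1-\tau)\Delta v\le M-u-\frac{\chi\tau}{2}|\nabla v|^2,
\]
and multiplying by \(-\frac{\chi}{2\tau}z\le0\) flips the inequality. Substituting this in, the \(u\)-terms cancel, and so do the \(|\nabla v|^2\)-terms: \(\frac{\chi^2}{4}z|\nabla v|^2\) coming from \(Y_\tau\) against \(-\frac{\chi^2}{4}z|\nabla v|^2\) coming from \(-\Delta z\). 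This leaves
\[
z_t-\Delta z\ge-\frac{\chi M}{2\tau}z+\frac{\chi}{2\tau}zv .
\]
Using \(v\ge0\), \(0<z\le1\) and \(M\ge0\) (by \eqref{eq:M-t0}, \(M\ge\tau C_*\ge0\)), the right-hand side is at least \(-\chi M/(2\tau)=-C_1\).

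The only delicate step is this bookkeeping in the lower bound. One has to check that the signs work: multiplying the \(Y_\tau\)-bound by the negative factor \(-\frac{\chi}{2\tau}z\) is exactly what is needed. One also has to check that the gradient terms cancel precisely rather than leaving a bad remainder. This cancellation is what dictates the exponent \(\chi/2\) in \eqref{eq:zdef}.
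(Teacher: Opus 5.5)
Your proof is correct and follows the paper's argument; the only difference is bookkeeping, since the paper substitutes $u=\tau v_t-\Delta v+v$ into $Y_\tau$ to get $Y_\tau=\tau\bigl(v_t-\Delta v+\frac\chi2|\nabla v|^2+\frac v\tau\bigr)$ and pairs it with $z_t-\Delta z=-\frac\chi2 z\bigl(v_t-\Delta v+\frac\chi2|\nabla v|^2\bigr)$, which makes the cancellations you check by hand automatic. Two side remarks are slightly overstated: $\tau<1$ is not actually used in this lemma (rearranging $Y_\tau\le M$ is valid for every $\tau$), and any exponent $k\in(0,\chi/2]$ in $z=e^{-kv}$ would also give the lower bound with the nonnegative remainder $k\bigl(\frac\chi2-k\bigr)z|\nabla v|^2$, so $\chi/2$ is the largest admissible exponent rather than a forced one.
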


\begin{proof}
	Since \(v\ge0\), we have \(0<z\le1\), and on \(\partial\Omega\times(t_0,T_{\max})\)
	\[
	\partial_\nu z=-\frac\chi2z\,\partial_\nu v=0.
	\]
	Substituting 
	\[
	u=\tau v_t-\Delta v+v
	\]
	into the definition of \(Y_\tau\), we obtain
	\[
	Y_\tau
	=
	\tau\left(
	v_t-\Delta v+\frac\chi2|\nabla v|^2+\frac v\tau
	\right).
	\]
	Hence \(Y_\tau\le M\) implies
	\[
	v_t-\Delta v+\frac\chi2|\nabla v|^2
	\le\frac{M-v}{\tau}.
	\]
	Since
	\[
	z_t-\Delta z
	=
	-\frac\chi2z
	\left(
	v_t-\Delta v+\frac\chi2|\nabla v|^2
	\right),
	\]
	it follows that
	\[
	z_t-\Delta z
	\ge
	-\frac{\chi M}{2\tau}z+\frac{\chi}{2\tau}vz
	\ge-\frac{\chi M}{2\tau},
	\]
	which proves \eqref{eq:z-lowerheat}.
	
	On the other hand, the signal equation gives
	\[
	\tau z_t
	=
	\Delta z-\frac{\chi^2}{4}z|\nabla v|^2
	+\frac\chi2z(v-u).
	\]
	Therefore, using \(u\ge0\),
	\[
	z_t-\frac1\tau\Delta z
	\le\frac{\chi}{2\tau}vz
	\le\frac{\chi}{2\tau}
	\sup_{s\ge0}se^{-\chi s/2}
	=\frac1{e\tau},
	\]
	which proves \eqref{eq:z-upperheat}.
\end{proof}

The two inequalities in \cref{lem:z-squeezing} involve different
diffusion rates. The following elementary lemma combines them into a
single uniformly parabolic equation with bounded measurable
coefficients.

\begin{lemma}
	\label{lem:squeezing}
	Let
	\[
	Q:=\Omega\times(t_0,T_{\max})
	\]
	and let $z\in C^{2,1}(Q)$ satisfy
	\[
	z_t-\Delta z\ge-C_1,
	\qquad
	z_t-\frac1\tau\Delta z\le C_2
	\quad\text{in }Q,
	\]
	where $0<\tau<1$. Then there exist measurable functions
	$a,f:Q\to\mathbb R$ such that
	\[
	z_t-a(x,t)\Delta z=f(x,t)
	\quad\text{in }Q,
	\]
	and
	\[
	1\le a(x,t)\le\frac1\tau,
	\qquad
	|f(x,t)|\le C_0:=\max\{C_1,C_2\}.
	\]
\end{lemma}

\begin{proof}
	Set
	\[
	P:=z_t,\qquad R:=\Delta z,\qquad
	k:=\frac1\tau>1,
	\]
	and
	\[
	r:=P-R,\qquad s:=P-kR.
	\]
	Then
	\[
	r\ge-C_0,\qquad s\le C_0.
	\]
	Let
	\[
	E:=\{(x,t)\in Q:r(x,t)>C_0\}
	\]
	and define
	\[
	a:=
	\begin{cases}
		1,&\text{in }Q\setminus E,\\[1mm]
		\displaystyle\frac{P-C_0}{R},&\text{in }E,
	\end{cases}
	\qquad
	f:=P-aR.
	\]
	
	In $Q\setminus E$, we have $a=1$ and
	\[
	f=r\in[-C_0,C_0].
	\]
	In $E$, since $s\le C_0<r$,
	\[
	(k-1)R=r-s>0,
	\]
	and hence $R>0$. Thus the quotient defining $a$ is well-defined on
	$E$. Moreover,
	\[
	a-1=\frac{r-C_0}{R}>0,
	\qquad
	k-a=\frac{C_0-s}{R}\ge0,
	\]
	and
	\[
	f=P-aR=C_0.
	\]
	Consequently,
	\[
	1\le a\le k=\frac1\tau,
	\qquad |f|\le C_0,\qquad \text{in  }Q.
	\]
Finally, $E$ is measurable, and the quotient $(P-C_0)/R$ is
	measurable on $E$, where $R>0$. Hence $a$ and $f$ are both measurable.
\end{proof}

\subsection{H\"older regularity of the signal}
The preceding squeezing argument allows us to apply H\"older estimates for uniformly parabolic equations with bounded measurable coefficients.
\begin{proposition}
	\label{prop:z-v-holder}
	There exists $\alpha=\alpha(N,\tau,\Omega)\in(0,1)$ such that,
	for every $t_1\in(t_0,T_{\max})$, there exists $C>0$ satisfying
	\begin{equation}
		\label{eq:z-v-holder}
		\sup_{T\in(t_1,T_{\max})}
		\left(
		\|z\|_{C^{\alpha,\alpha/2}
			(\overline\Omega\times[t_1,T])}
		+
		\|v\|_{C^{\alpha,\alpha/2}
			(\overline\Omega\times[t_1,T])}
		\right)
		\le C.
	\end{equation}
\end{proposition}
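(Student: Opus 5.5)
By \cref{lem:squeezing}, $z$ solves the linear nondivergence-form equation
\[
z_t-a(x,t)\Delta z=f(x,t)\qquad\text{in }Q,
\]
with $1\le a\le 1/\tau$ and $|f|\le C_0$, together with $\partial_\nu z=0$ on $\partial\Omega\times(t_0,T_{\max})$ and $0<z\le 1$. The plan is to apply the Krylov--Safonov Hölder estimate for solutions of uniformly parabolic nondivergence equations with bounded measurable coefficients, in its interior form and in the form for the oblique/Neumann boundary problem (Lieberman's boundary version). This gives a bound
\[
\|z\|_{C^{\alpha,\alpha/2}(\overline\Omega\times[s,s+1])}\le C\bigl(\|z\|_{L^\infty}+\|f\|_{L^\infty}\bigr)\le C(1+C_0)
\]
on each unit time slab $[s,s+1]\subset[t_1,T_{\max})$. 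Here $\alpha$ and $C$ depend only on $N$, the ellipticity ratio $1/\tau$, $\Omega$, and the gap $t_1-t_0$ used as a time cushion. The key point is that the constants are independent of $s$, because the data bounds ($|z|\le1$, $|f|\le C_0$) are uniform in time. Since $z\in C^{2,1}$ is a strong solution, the $W^{2,1}_{N+1}$-type hypotheses of Krylov--Safonov hold locally. Patching overlapping slabs $[s,s+1]$ with $s\ge t_1-\tfrac12$ then gives a uniform Hölder bound on $[t_1,T]$ for every $T<T_{\max}$. Temporal Hölder differences over distances greater than $1$ are trivially bounded by $2\|z\|_\infty$.

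Next, pass from $z$ to $v=-\tfrac{2}{\chi}\log z$. By \cref{lem:v-upper}, $0\le v\le v^*$, so
\[
z\ge z_*:=e^{-\chi v^*/2}>0
\]
on $\overline\Omega\times[t_0,T_{\max})$. On $[z_*,1]$ the logarithm is Lipschitz with constant $1/z_*$, so the Hölder seminorm of $v$ is at most $\tfrac{2}{\chi z_*}$ times that of $z$, while $\|v\|_\infty\le v^*$. This yields \eqref{eq:z-v-holder} for $v$ as well.

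The main obstacle is the boundary Hölder estimate. Krylov--Safonov is usually stated for interior regions or for Dirichlet data. With the Neumann condition, one either flattens the boundary locally and reflects evenly across it, or cites Lieberman's estimate for oblique derivative problems with measurable coefficients. I would first try local flattening: in boundary-normal coordinates the Neumann condition becomes $\partial_{y_N}\tilde z=0$, and even reflection produces a strong solution of an equation with bounded measurable coefficients in the full cylinder. Flattening turns $a\Delta$ into $a\,g^{ij}\partial_{ij}+a\,b^i\partial_i$, where the metric $g^{ij}$ is smooth. The first-order term must be absorbed, either by using the version of Krylov--Safonov that allows bounded drift, or by noting that the first-order term is $a\,b\cdot\nabla z$ and treating it as drift. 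Under even reflection the drift coefficient's normal component flips sign, but it remains bounded and measurable, so the reflected function is still a $W^{2,1}_{N+1,\mathrm{loc}}$ solution of an equation in the admissible class. I would verify this carefully, since it is where the uniformity of constants along the whole boundary, controlled by the smoothness of $\partial\Omega$, enters.
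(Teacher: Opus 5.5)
Your proposal is correct and follows essentially the same route as the paper. The squeezing lemma gives $z_t-a\Delta z=f$ with $1\le a\le 1/\tau$, $|f|\le C_0$, $\partial_\nu z=0$, and $0<z\le1$; interior and Neumann-boundary H\"older estimates of Krylov--Safonov type then apply, with a time cushion controlled by $t_1-t_0$, a covering argument, and the Lipschitz bound for $-\tfrac{2}{\chi}\log$ on $[e^{-\chi v^*/2},1]$. The paper cites Lieberman's Corollaries~7.41 and~7.51 directly (the weighted condition $|f|\le C\,d^{\delta_0-1}$ holds trivially since $f$ is bounded) rather than your reflection alternative, and your overlapping slabs should start at $s\ge t_1$, not $s\ge t_1-\tfrac12$, so that every cushioned cylinder stays above $t_0$.
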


\begin{proof}
	By \cref{lem:squeezing,lem:z-squeezing}, there exist measurable
	functions \(a\) and \(f\) on
	\(\Omega\times(t_0,T_{\max})\) such that
	\[
	z_t-a(x,t)\Delta z=f,
	\qquad
	1\le a(x,t)\le\tau^{-1},
	\qquad
	|f|\le C_0,
	\]
	together with
	\[
	0<z\le1,
	\qquad
	\partial_\nu z=0.
	\]
	
	Fix \(t_1\in(t_0,T_{\max})\), and fix
	\(\delta_0=1/2\). Choose \(\rho>0\), depending only on
	\(t_1-t_0\) and \(\Omega\), sufficiently small that
	\[
	16\rho^2<t_1-t_0,
	\]
	every boundary portion of radius \(4\rho\) is contained in a
	boundary coordinate neighborhood, and the boundary estimate cited
	below is applicable at scale \(4\rho\). Thus every backward cylinder
	of radius \(4\rho\) with top time in
	\([t_1,T_{\max})\) lies above the time level \(t_0\).
	
	We apply the interior and boundary H\"older estimates
	\cite[Corollaries~7.41 and~7.51]{Lieberman2005} to
	\[
	Lz:=-z_t+a^{ij}(x,t)D_{ij}z=-f,
	\qquad
	a^{ij}(x,t):=a(x,t)\delta_{ij}.
	\]
	Apart from the weighted assumption on the inhomogeneous term in the
	boundary estimate, the structural hypotheses follow immediately
	from \(1\le a\le\tau^{-1}\), the smoothness of
	\(\partial\Omega\), and the homogeneous Neumann condition. Indeed,
	in the notation of \cite{Lieberman2005}, the latter has the form
	\[
	\mathcal Mz=\beta\cdot Dz+\beta^0z=0,
	\qquad
	\beta=\gamma,
	\qquad
	\beta^0=0,
	\]
	where \(\gamma\) denotes the unit inner normal.
	
	It remains to verify the condition on \(f\). Let \(d(X)\) denote
	the distance to the lateral boundary occurring in
	\cite[Corollary~7.51]{Lieberman2005}. On each boundary cylinder of
	radius \(4\rho\),
	\[
	0<d(X)\le4\rho.
	\]
	Since \(\delta_0-1=-1/2\), the bound \(|f|\le C_0\) yields
	\[
	|f(X)|
	\le C_0
	\le C_0(4\rho)^{1-\delta_0}d(X)^{\delta_0-1}.
	\]
	Hence the required weighted bound on the inhomogeneous term is
	satisfied.
	
	Corollaries~7.41 and~7.51 of \cite{Lieberman2005} therefore provide
	some \(\alpha=\alpha(N,\tau,\Omega)\in(0,1)\) and a uniform local
	\(C^{\alpha,\alpha/2}\)-estimate for \(z\) on
	\(\overline\Omega\times[t_1,T_{\max})\), with constants independent
	of the time position of the cylinders. A standard covering
	argument, together with \(0<z\le1\), gives
	\[
	\sup_{T\in(t_1,T_{\max})}
	\|z\|_{C^{\alpha,\alpha/2}
		(\overline\Omega\times[t_1,T])}
	\le C.
	\]
	
	Finally, by \eqref{eq:v-uniform-upper},
	\[
	e^{-\chi v^*/2}\le z\le1,
	\qquad
	v=-\frac2\chi\log z.
	\]
	Since \(s\mapsto-(2/\chi)\log s\) is Lipschitz on
	\([e^{-\chi v^*/2},1]\), we obtain
	\[
	[v]_{C^{\alpha,\alpha/2}
		(\overline\Omega\times[t_1,T])}
	\le
	\frac2\chi e^{\chi v^*/2}
	[z]_{C^{\alpha,\alpha/2}
		(\overline\Omega\times[t_1,T])}.
	\]
	Together with \(\|v\|_{L^\infty}\le v^*\), this proves
	\eqref{eq:z-v-holder}.
\end{proof}

\subsection{H\"older--Sobolev interpolation}
To exploit the H\"older control in the $u^p$-energy estimate, we use the
following H\"older--Sobolev interpolation inequality, which extends \cite[Lemma~2.2]{ChenCMS2025}.
\begin{lemma}
	\label{lem:interpolation}
	Let $\Omega\subset\mathbb R^N$ be a bounded domain with smooth boundary.
	For any $\alpha\in(0,1)$, $q\in(1,\infty)$, and
	\begin{equation}
		\theta\in\left(1/2,1/(2-\alpha)\right),\label{eq:theta-range}
	\end{equation}
	there exists $C=C(\Omega,\alpha,q,\theta)>0$ such that
	\begin{equation}
		\label{eq:interpolation}
		\|\nabla\varphi\|_{L^{2q}(\Omega)}
		\le
		C
		\|\varphi\|_{C^\alpha(\overline\Omega)}^\theta
		\|\varphi\|_{W^{2,q}(\Omega)}^{1-\theta}
	\end{equation}
	for every
	$\varphi\in C^\alpha(\overline\Omega)\cap W^{2,q}(\Omega)$.
\end{lemma}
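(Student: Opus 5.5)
The idea is to view $\nabla\varphi$ as lying "between" $C^\alpha$ and $W^{2,q}$, and to prove the inequality by real or complex interpolation, with Gagliardo--Nirenberg as a fallback. First, subtracting the mean does not change $\nabla\varphi$. It also does not increase the $C^\alpha$ or $W^{2,q}$ norms by more than a constant factor. So we may assume whatever normalization is convenient.

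\emph{Route 1: Gagliardo--Nirenberg type inequality for H\"older spaces.}
The scaling exponents are
\[
C^\alpha\sim -\alpha,\qquad W^{2,q}\sim 2-\tfrac Nq,\qquad W^{1,2q}\sim 1-\tfrac N{2q}.
\]
The exponent $\theta_0$ with $1-\frac N{2q}=\theta_0(-\alpha)+(1-\theta_0)(2-\frac Nq)$ gives $\theta_0\bigl(2-\tfrac Nq+\alpha\bigr)=1-\tfrac N{2q}$. The differentiability count $1=(1-\theta)2+\theta\alpha$ gives $\theta=1/(2-\alpha)$. The admissible range is therefore the half-open window with $\theta<1/(2-\alpha)$; the upper end is exactly the differentiability-critical exponent, where one loses. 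The lower bound $\theta>1/2$ corresponds to the trivial case $\alpha=0$, i.e. $L^\infty$--$W^{2,q}$ interpolation. That case is the known inequality
\[
\|\nabla\varphi\|_{L^{2q}}^2\le C\|\varphi\|_{L^\infty}\|\varphi\|_{W^{2,q}},
\]
which follows by integrating by parts $\int|\nabla\varphi|^{2q}$ with a bounded extension. This is the cited \cite[Lemma~2.2]{ChenCMS2025} with $\theta=1/2$.

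\emph{Route 2: extension and Besov interpolation.}
I would instead argue through Besov spaces:
\begin{enumerate}
\item Use a bounded extension operator $E$ (Stein, or reflection up to order $2$) on $\Omega$. It is simultaneously bounded $C^\alpha(\overline\Omega)\to C^\alpha(\mathbb R^N)=B^\alpha_{\infty,\infty}$ and $W^{2,q}(\Omega)\to W^{2,q}(\mathbb R^N)\subset B^2_{q,\infty}$. After multiplying by a cutoff, $E\varphi$ has compact support.
\item Use Littlewood--Paley blocks $\Delta_j$, with
\[
\|\Delta_jE\varphi\|_{L^\infty}\le C2^{-j\alpha}A,\qquad \|\Delta_jE\varphi\|_{L^q}\le C2^{-2j}B,
\]
where $A,B$ are the two norms.
\item By H\"older between $L^\infty$ and $L^q$ (specifically $\|g\|_{L^{2q}}\le\|g\|_{L^\infty}^{1/2}\|g\|_{L^q}^{1/2}$), and more generally $\|g\|_{L^{2q}}\le \|g\|_\infty^{\theta'}\|g\|_{L^{q'}}^{1-\theta'}$ combined with Bernstein, obtain
\[
\|\nabla\Delta_jE\varphi\|_{L^{2q}}\le C\,2^{j}\min\bigl\{2^{-j\alpha}A,\;2^{jN/(2q)}2^{-2j}B\bigr\}^{\cdots}.
\]
\end{enumerate}

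The cleanest way to organize step 3 is the bound
\[
\|\nabla\Delta_j f\|_{L^{2q}}\le C2^j\|\Delta_j f\|_\infty^{1/2}\|\Delta_j f\|_{L^q}^{1/2}\le C2^{j(1-\alpha/2-1)}A^{1/2}B^{1/2},
\]
giving the factor $2^{-j\alpha/2}$. This is summable in $j$, which already yields the estimate at $\theta=1/2$, now with an $\alpha$-gain. To reach $\theta>1/2$, split $\sum_j$ at $2^J$. For low frequencies use the $A$-bound with Bernstein, $\|\nabla\Delta_j f\|_{2q}\le C2^{j(1-\alpha)}A$. For high frequencies use the mixed bound above. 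Optimizing $J$ gives the interpolation with any weighting between the endpoints. Equivalently, use $\|\nabla f\|_{L^{2q}}\lesssim \|f\|_{B^{1+\epsilon}_{2q,1}}$ and the interpolation $[B^\alpha_{\infty,\infty},B^{2}_{q,\infty}]$. The strict inequalities in the $\theta$ range supply the small room $\epsilon$ needed for summability. Here $\theta<1/(2-\alpha)$ ensures positive smoothness excess at low frequencies, and $\theta>1/2$ ensures the $L^{2q}$ integrability from the $L^\infty$–$L^q$ mixing.

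\emph{Main obstacle.}
The delicate step is the bookkeeping for the intermediate integrability in step 3. Pure Besov interpolation between $B^\alpha_{\infty,\infty}$ and $B^2_{q,\infty}$ at parameter $1-\theta$ lands in $L^{q/(1-\theta)}$, and this is $\ge 2q$ exactly when $\theta\ge1/2$. So for $\theta>1/2$ one embeds $L^{q/(1-\theta)}\subset L^{2q}$ on the bounded support. The smoothness obtained is $\theta\alpha+2(1-\theta)$, which exceeds $1$ exactly when $\theta<1/(2-\alpha)$. This gives $B^{1+\epsilon}_{q/(1-\theta),\infty}\subset W^{1,2q}$. Finally, I would restrict back to $\Omega$.
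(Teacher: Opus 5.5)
Your argument is correct in substance, but it takes a different route from the paper.

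\textbf{The paper's route.} The paper never leaves $\Omega$. It sets $\sigma=2-1/\theta$ and $r=2\theta q/(2\theta-1)$, so that the Gagliardo--Nirenberg relations $1=\theta\sigma+2(1-\theta)$ and $\frac1{2q}=\frac\theta r+\frac{1-\theta}q$ hold \emph{exactly}. It then quotes the Brezis--Mironescu theorem between $W^{\sigma,r}(\Omega)$ and $W^{2,q}(\Omega)$; the exceptional case is excluded because $q>1$. All the slack in the $\theta$-range is spent on the elementary embedding $C^\alpha(\overline\Omega)\hookrightarrow W^{\sigma,r}(\Omega)$. That embedding needs $0<\sigma<\alpha$ and $r<\infty$, which is precisely $1/2<\theta<1/(2-\alpha)$.

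\textbf{Your route.} You extend to $\mathbb R^N$ and keep $C^\alpha=B^\alpha_{\infty,\infty}$ itself as the endpoint. The slack is spent in two places. The smoothness excess $s=\theta\alpha+2(1-\theta)>1$ makes the dyadic series for $\nabla f$ converge. The integrability surplus $p=q/(1-\theta)\ge 2q$ is absorbed by the compact support of $f$.

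\textbf{What each buys.} The paper's proof is a short exponent computation plus one black-box citation, and it needs no extension operator. Yours is self-contained modulo Bernstein's inequality and the dyadic characterizations of $C^\alpha$ and $W^{2,q}$. It also shows transparently where each end of the $\theta$-range comes from, and it covers $\theta=1/2$ as well.

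\textbf{Points to tighten.}
First, commit to the blockwise H\"older estimate $\|\Delta_jf\|_{L^p}\le\|\Delta_jf\|_{L^\infty}^{\theta}\|\Delta_jf\|_{L^q}^{1-\theta}\lesssim 2^{-js}A^\theta B^{1-\theta}$ with $p=q/(1-\theta)$, followed by Bernstein and summation in $j$. This replaces the appeal to abstract interpolation of $B^\alpha_{\infty,\infty}$ and $B^2_{q,\infty}$, whose identification with infinite indices is delicate and which you do not need.

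Second, in the frequency-split variant, $\Delta_jf$ is not compactly supported even though $f$ is. So the low-frequency bound $\|\nabla\Delta_jf\|_{L^{2q}}\lesssim 2^{j(1-\alpha)}A$ must be taken over $\Omega$ (or a fixed ball), which is all you need anyway.

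Third, the optimal choice $2^J\simeq(B/A)^{1/(2-\alpha)}$ gives exactly $\theta=1/(2-\alpha)$, not ``any weighting''. Intermediate values come from non-optimal $J$, or from geometric means with the $\theta=1/2$ bound. So, contrary to your Route~1 remark, nothing is lost at the upper endpoint.

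Finally, in Route~1 the scaling exponent of $C^\alpha$ should be $+\alpha$, not $-\alpha$. None of these points affects the lemma as stated.
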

\begin{proof}
	Set
	\begin{equation}
		\label{eq:s-r-interp}
		\sigma:=2-\frac1\theta,
		\qquad
		r:=\frac{2\theta q}{2\theta-1}.
	\end{equation}
	The restriction \eqref{eq:theta-range} ensures that
	\[
	0<\sigma<\alpha<1,
	\qquad
	1<r<\infty.
	\]
	Moreover,
	\begin{equation}
		\label{eq:interp-relations}
		1=\theta\sigma+2(1-\theta),
		\qquad
		\frac1{2q}=\frac\theta r+\frac{1-\theta}{q}.
	\end{equation}
	The Gagliardo--Nirenberg theorem
	\cite[Theorem~1(A)]{BrezisMironescu2018}, applied with
	\[
	(s_1,p_1)=(\sigma,r),
	\qquad
	(s,p)=(1,2q),
	\qquad
	(s_2,p_2)=(2,q),
	\]
	and with the interpolation parameter $\theta$, therefore yields
	\begin{equation}
		\label{eq:BM-interpolation}
		\norm{\varphi}_{W^{1,2q}(\Om)}
		\le C
		\norm{\varphi}_{W^{\sigma,r}(\Om)}^\theta
		\norm{\varphi}_{W^{2,q}(\Om)}^{1-\theta}.
	\end{equation}
	Indeed, the exceptional case in that theorem cannot occur because the
	higher-order exponent here is $p_2=q>1$.
	
	On the other hand, since $\sigma<\alpha$ and
	$\Om$ is bounded, by the definition of the fractional Sobolev seminorm in \cite[Definition~2.1]{BrezisMironescu2018}
	\begin{align*}
		[\varphi]_{W^{\sigma,r}(\Om)}^r
		&=\int_\Om\int_\Om
		\frac{|\varphi(x)-\varphi(y)|^r}{|x-y|^{N+\sigma r}}\,dx\,dy\\
		&\le [\varphi]_{C^\alpha(\overline\Om)}^r
		\int_\Om\int_\Om
		|x-y|^{-N+(\alpha-\sigma)r}\,dx\,dy\\
		&\le C\norm{\varphi}_{C^\alpha(\overline\Om)}^r.
	\end{align*}
	Together with the elementary estimate
	$\norm{\varphi}_{L^r(\Om)}\le
	C\norm{\varphi}_{C^\alpha(\overline\Om)}$, the above estimate gives
	\begin{equation}
		\label{eq:Calpha-Wsr}
		\norm{\varphi}_{W^{\sigma,r}(\Om)}
		\le C\norm{\varphi}_{C^\alpha(\overline\Om)}.
	\end{equation}
	Finally, \eqref{eq:interpolation} follows from  \eqref{eq:BM-interpolation}, \eqref{eq:Calpha-Wsr}, and the fact that
	$\norm{\nabla\varphi}_{L^{2q}}\le
	\norm{\varphi}_{W^{1,2q}}$.
\end{proof}

\begin{corollary}
	\label{cor:badterm}
	Let \(p>1\), and suppose that
	\[
	\|v\|_{C^\alpha(\overline\Omega)}\le K.
	\]
	Then, for every \(\varepsilon>0\), there exists
	\(C_{\varepsilon,p,K}
	=
	C(\varepsilon,p,K,\Omega,\alpha,\theta)\) such that
	\begin{equation}
		\label{eq:badterm}
		\int_\Omega u^p|\nabla v|^2\,dx
		\le
		\varepsilon\int_\Omega u^{p+1}\,dx
		+\varepsilon\|v\|_{W^{2,p+1}(\Omega)}^{p+1}
		+C_{\varepsilon,p,K}.
	\end{equation}
\end{corollary}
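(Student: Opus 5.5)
We split the integrand with Hölder's inequality and then use \cref{lem:interpolation} to control $\nabla v$. The exponents $(p+1)/p$ and $p+1$ give
\[
\int_\Omega u^p|\nabla v|^2\,dx
\le
\|u\|_{L^{p+1}(\Omega)}^{p}\,
\|\nabla v\|_{L^{2(p+1)}(\Omega)}^{2}.
\]
We then apply \cref{lem:interpolation} with $q:=p+1\in(1,\infty)$ and some fixed $\theta\in(1/2,1/(2-\alpha))$; this fixed $\theta$ is the one the constant is allowed to depend on. Using $\|v\|_{C^\alpha(\overline\Omega)}\le K$, this yields
\[
\|\nabla v\|_{L^{2(p+1)}(\Omega)}^2
\le C K^{2\theta}\|v\|_{W^{2,p+1}(\Omega)}^{2(1-\theta)}.
\]

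Next we apply Young's inequality with three factors:
\[
A:=\|u\|_{L^{p+1}}^p,\qquad B:=\|v\|_{W^{2,p+1}}^{2(1-\theta)},\qquad \text{and the constant } 1.
\]
We want $A$ raised to the power $(p+1)/p$ and $B$ raised to the power $(p+1)/(2(1-\theta))$. The conjugate exponents must therefore satisfy
\[
\frac{p}{p+1}+\frac{2(1-\theta)}{p+1}+\frac1{r}=1,
\qquad\text{that is,}\qquad
\frac1r=\frac{1-2(1-\theta)}{p+1}=\frac{2\theta-1}{p+1}.
\]
This is positive exactly because $\theta>1/2$, so the constant factor absorbs the remaining weight. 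This is the key point of the argument.

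Young's inequality $xy\cdot 1\le \varepsilon x^{a}+\varepsilon y^{b}+C_\varepsilon$, with $1/a+1/b<1$, then gives
\[
\int_\Omega u^p|\nabla v|^2\,dx
\le
\varepsilon\|u\|_{L^{p+1}}^{p+1}+\varepsilon\|v\|_{W^{2,p+1}}^{p+1}+C_{\varepsilon,p,K},
\]
where $C_{\varepsilon,p,K}$ depends on $\varepsilon,p,K$ and on the interpolation constant $C(\Omega,\alpha,p+1,\theta)$. Since $\|u\|_{L^{p+1}}^{p+1}=\int_\Omega u^{p+1}\,dx$, this is \eqref{eq:badterm}.

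The only real point to check is the exponent count above. The restriction $\theta>1/2$ in \eqref{eq:theta-range} is what leaves strictly positive room for the constant term, so both $\int_\Omega u^{p+1}\,dx$ and $\|v\|_{W^{2,p+1}}^{p+1}$ can carry small coefficients. The cases where one of the norms vanishes are trivial.
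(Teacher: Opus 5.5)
Your proposal is correct and follows the paper's own proof: the paper also applies H\"older with $q=p+1$, then \cref{lem:interpolation} to get $\|\nabla v\|_{L^{2q}}^2\le C(K)\|v\|_{W^{2,q}}^{2(1-\theta)}$. It then closes with Young's inequality, using that $\frac{p+2(1-\theta)}{p+1}<1$ because $\theta>1/2$, which is exactly the exponent count you identified as the key point.
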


\begin{proof} Fix \(\theta\in(1/2,1/(2-\alpha))\) and let \(q=p+1\).
By H\"older's inequality and \cref{lem:interpolation},
\begin{align*}
 \int_\Om u^p|\nabla v|^2
 \le\norm{u}_{L^q}^p\norm{\nabla v}_{L^{2q}}^2
 \le C(K)
 \left(\int_\Om u^q\right)^{p/q}
 \left(\norm{v}_{W^{2,q}}^q\right)^{2(1-\theta)/q}.
\end{align*}
Since $1-\theta\in(0,1/2)$,
\[
 \frac pq+\frac{2(1-\theta)}q
 =\frac{p+2(1-\theta)}{p+1}<1,
\]
an application of  Young's inequality then yields \eqref{eq:badterm}.
\end{proof}

\subsection{Completion of boundedness}

Combining the preceding mixed-term estimate with weighted maximal
regularity for the signal equation yields uniform large-$L^p$ bounds
for the density.

\begin{proposition}
	\label{prop:Lp}
	Assume \(0<\tau<1\) and \eqref{eq:threshold}. For every
	\(p>N\) and every \(t_2\in(t_0,T_{\max})\), there exists
	\(C_p>0\) such that
	\begin{equation}
		\label{eq:Lp-uniform}
		\sup_{t\in[t_2,T_{\max})}
		\|u(t)\|_{L^p(\Om)}
		\le C_p.
	\end{equation}
\end{proposition}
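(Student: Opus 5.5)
We test the $u$-equation against $u^{p-1}$ on $[t_2,T_{\max})$ and close the resulting estimate with \cref{cor:badterm} and weighted maximal regularity for the signal. Fix $p>N$ and $t_1\in(t_0,t_2)$. By \cref{prop:z-v-holder} there is $K$ with $\|v(t)\|_{C^\alpha(\overline\Om)}\le K$ for all $t\in[t_1,T_{\max})$. Testing the first equation of \eqref{eq:system} by $u^{p-1}$ gives
\[
\frac1p\frac{d}{dt}\int_\Om u^p+(p-1)\int_\Om u^{p-2}|\nabla u|^2
=(p-1)\chi\int_\Om u^{p-1}\nabla u\cdot\nabla v+\lambda\int_\Om u^p-\mu\int_\Om u^{p+1}.
\]
Young's inequality absorbs half of the cross term into the diffusion, leaving $\frac{(p-1)\chi^2}{2}\int_\Om u^p|\nabla v|^2$. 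This is exactly the term handled by \cref{cor:badterm}, so it is at most $\varepsilon\int_\Om u^{p+1}+\varepsilon\|v\|_{W^{2,p+1}}^{p+1}+C_\varepsilon$. We deliberately avoid the $u\Delta v$ form of the cross term, because that would bring in the constant $\mu$ versus the maximal-regularity constant, which is exactly what the explicit criterion must avoid. Here only $\varepsilon$ appears, so the dependence on $\mu$ disappears.

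Next we add $\frac{p}{p}\int_\Om u^p$ to both sides and bound $(\lambda+1)\int_\Om u^p\le\frac\mu4\int_\Om u^{p+1}+C$. With $y(t):=\int_\Om u^p$ this yields
\[
y'+y\le -\frac{p\mu}{2}\int_\Om u^{p+1}+p\varepsilon\|v\|_{W^{2,p+1}}^{p+1}+C.
\]
We multiply by $e^{t}$ and integrate over $(t_1,t)$.

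The remaining task is the time-weighted bound for $\int e^{s}\|v(s)\|_{W^{2,p+1}}^{p+1}ds$. We use weighted maximal $L^{p+1}$-regularity for $\tau v_t-\Delta v+v=u$ with Neumann data. Applying the standard maximal regularity estimate to $e^{s/(p+1)}v$, whose forcing is $e^{s/(p+1)}u$ plus the harmless lower-order term $\frac{\tau}{p+1}e^{s/(p+1)}v$, gives
\[
\int_{t_1}^t e^{s}\|v(s)\|_{W^{2,p+1}}^{p+1}ds\le C_{MR}\int_{t_1}^t e^{s}\int_\Om u^{p+1}\,ds+C_{MR}\|v(t_1)\|_{W^{2,p+1}}^{p+1}+C\int_{t_1}^t e^s\|v\|_{L^{p+1}}^{p+1}.
\]
Here $\|v\|_{L^\infty}\le v^*$ by \cref{lem:v-upper}, so the last term is $\le Ce^t$. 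The initial term is finite by positive-time smoothing. The constant $C_{MR}$ depends on $\tau,p,\Om$ but not on $t$, and this time-independence is what the exponential weight is for.

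Finally we choose $\varepsilon$ so small that $p\varepsilon C_{MR}\le\frac{p\mu}{2}$. Then the $u^{p+1}$ integrals cancel, and we obtain $e^{t}y(t)\le e^{t_1}y(t_1)+C+Ce^{t}$, that is, $y(t)\le C$ uniformly on $[t_1,T_{\max})$, and a fortiori on $[t_2,T_{\max})$. The main obstacle is this weighted maximal regularity step: it must hold with a constant uniform in the time horizon and with the correct exponential weighting. It is also essential that \cref{cor:badterm} enters with an arbitrarily small $\varepsilon$ multiplying the $W^{2,p+1}$ norm. Both properties are needed so that the closing is unconditional and does not reintroduce a largeness condition on $\mu$.
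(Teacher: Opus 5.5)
Your proposal is correct and follows essentially the same route as the paper. You test by $u^{p-1}$, control $\int_\Omega u^p|\nabla v|^2$ through \cref{cor:badterm} with an arbitrarily small $\varepsilon$, and close with an exponentially weighted maximal $L^{p+1}$-regularity estimate for the signal equation, choosing $\varepsilon$ only after the maximal-regularity constant is fixed.

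The differences are cosmetic. You use the weight $e^{s}$ and derive the weighted estimate yourself by applying half-line maximal regularity for $\tau\partial_t-\Delta+1$ to $e^{s/(p+1)}v$, with the extra zeroth-order term harmless by \cref{lem:v-upper}. Note that the horizon-independence of $C_{MR}$ comes from the invertibility of the Neumann operator $-\Delta+1$, not from the weight as you suggest. The paper instead uses the weight $e^{-\sigma(t-s)}$ with $\sigma=q/\tau$ and cites \cite[Lemma~2.2]{YangCaoJiangZheng2015} for $\|\Delta v\|_{L^q}$.
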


\begin{proof}
	Fix \(p>N\) and set \(q:=p+1\). By
	\cref{prop:z-v-holder}, with \(t_1=t_2\), there exists
	\(K_\alpha>0\) such that
	\begin{equation}
		\label{eq:spatial-holder-uniform}
		\sup_{t\in[t_2,T_{\max})}
		\|v(t)\|_{C^\alpha(\overline\Om)}
		\le K_\alpha .
	\end{equation}
	
	Testing the first equation in \eqref{eq:system} by \(u^{p-1}\)
	and using Young's inequality, we obtain
	\begin{align}
		\frac1p\frac{d}{dt}\int_\Om u^p
		&+\frac{p-1}{2}\int_\Om u^{p-2}|\nabla u|^2
		+\mu\int_\Om u^{p+1} \notag\\
		&\le
		\frac{\chi^2(p-1)}2
		\int_\Om u^p|\nabla v|^2
		+\lambda_+\int_\Om u^p .
		\label{eq:p-energy}
	\end{align}
	By the Neumann elliptic estimate, \eqref{eq:spatial-holder-uniform},
	and \cref{cor:badterm}, for every \(\varepsilon>0\),
	\[
	\int_\Om u^p|\nabla v|^2
	\le
	\varepsilon\int_\Om u^q
	+\varepsilon\|\Delta v\|_{L^q(\Om)}^q
	+C_{\varepsilon,p}.
	\]
	Together with
	\[
	\int_\Om u^p
	\le \eta\int_\Om u^q+C_{\eta,p},
	\]
	this yields constants \(c_0>0\), \(c_p>0\), and
	\(\varepsilon_0>0\) such that
	\begin{equation}
		\label{eq:yprime-U-D}
		y'(t)+c_0U(t)
		\le c_p\varepsilon D(t)+C_{\varepsilon,p}
	\end{equation}
	for \(0<\varepsilon\le\varepsilon_0\), where
	\[
	y(t):=\int_\Om u^p,\qquad
	U(t):=\int_\Om u^q,\qquad
	D(t):=\|\Delta v(t)\|_{L^q(\Om)}^q.
	\]
	
	Set \(\sigma:=q/\tau\). Since by Young's inequality
	\[
	\sigma y(t)\le \frac{c_0}{2}U(t)+C,
	\]
	we infer from \eqref{eq:yprime-U-D} that
	\[
	y'(t)+\sigma y(t)+\frac{c_0}{2}U(t)
	\le c_p\varepsilon D(t)+C_{\varepsilon,p}.
	\]
	Hence, for \(t\in[t_2,T_{\max})\),
	\begin{align}
		y(t)+\frac{c_0}{2}
		\int_{t_2}^t e^{-\sigma(t-s)}U(s)\,ds
		&\le
		e^{-\sigma(t-t_2)}y(t_2) \notag\\
		&\quad
		+c_p\varepsilon
		\int_{t_2}^t e^{-\sigma(t-s)}D(s)\,ds
		+C_{\varepsilon,p}.
		\label{eq:y-weighted}
	\end{align}
	
	Since \(t_2>0\), we have
	\(v(t_2)\in W^{2,q}(\Om)\) and
	\(\partial_\nu v(t_2)=0\). Thus,
	\cite[Lemma~2.2]{YangCaoJiangZheng2015} gives
	\begin{align}
		\int_{t_2}^t e^{-\sigma(t-s)}D(s)\,ds
		&\le
		K_q\int_{t_2}^t e^{-\sigma(t-s)}U(s)\,ds \notag\\
		&\quad
		+C e^{-\sigma(t-t_2)}
		\left(
		\|v(t_2)\|_{L^q}^q+
		\|\Delta v(t_2)\|_{L^q}^q
		\right).
		\label{eq:D-by-U}
	\end{align}
	Now \(p,q,\tau\), and hence \(K_q\), are fixed. Choosing
	\(\varepsilon>0\) so small that
	\[
	c_p\varepsilon K_q<\frac{c_0}{4},
	\]
	and substituting \eqref{eq:D-by-U} into
	\eqref{eq:y-weighted}, we obtain
	\[
	y(t)\le C_p
	\qquad
	\text{for all }t\in[t_2,T_{\max}).
	\]
	This completes the proof.
\end{proof}

Choosing $p>N+2$ in the preceding estimate and applying standard
parabolic smoothing completes the case $0<\tau<1$.
\begin{proposition}
	\label{prop:completion-tau<1}
	Under the assumptions of \cref{thm:main}, if \(0<\tau<1\), then
	\(T_{\max}=\infty\) and \eqref{eq:mainbounded} holds.
\end{proposition}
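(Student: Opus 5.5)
The plan is to turn the uniform $L^p$ bound of \cref{prop:Lp}, with some fixed $p>N+2$, into a uniform $W^{1,\infty}$ bound for $v$ and then an $L^\infty$ bound for $u$. Constants will be independent of the time position, and the extensibility criterion \eqref{eq:extensibility-u} will then close the argument.

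\textbf{Step 1: gradient bound for $v$.} Fix $t_2\in(t_0,T_{\max})$ and $p>N+2$, so that $\sup_{t\ge t_2}\|u(t)\|_{L^p}\le C_p$. I would write $v$ via the variation-of-constants formula for $\tau v_t=\Delta v-v+u$ starting at time $t_2$:
\[
v(t)=e^{\frac{t-t_2}{\tau}(\Delta-1)}v(t_2)+\frac1\tau\int_{t_2}^t e^{\frac{t-s}{\tau}(\Delta-1)}u(s)\,ds.
\]
The standard Neumann heat semigroup estimate
\[
\|\nabla e^{\sigma\Delta}\varphi\|_{L^\infty}\le C\bigl(1+\sigma^{-\frac12-\frac N{2p}}\bigr)\|\varphi\|_{L^p}
\]
gives $\sup_{t\ge t_2}\|\nabla v(t)\|_{L^\infty}\le C$. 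The kernel is integrable because $\frac12+\frac N{2p}<1$, and the factor $e^{-\sigma}$ controls large times. The first term is bounded since $v(t_2)\in W^{1,\infty}$. The bound $\|v\|_{L^\infty}\le v^*$ from \cref{lem:v-upper} is already available, so $v$ is bounded in $W^{1,\infty}$.

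\textbf{Step 2: $L^\infty$ bound for $u$.} I would use the semigroup representation for the $u$-equation, written as
\[
u_t=\Delta u-u-\chi\nabla\cdot(u\nabla v)+(\lambda+1)u-\mu u^2 .
\]
The logistic part satisfies $(\lambda+1)u-\mu u^2\le(\lambda+1)_+^2/(4\mu)$, so it contributes a bounded source, and since $u\ge0$ only an upper bound is needed. For the drift term, $\|u\nabla v\|_{L^p}\le C$ by Step 1. The estimate
\[
\|e^{\sigma(\Delta-1)}\nabla\cdot\psi\|_{L^\infty}\le C\bigl(1+\sigma^{-\frac12-\frac N{2p}}\bigr)e^{-\sigma}\|\psi\|_{L^p}
\]
then gives a uniform bound for $\|u(t)\|_{L^\infty}$ on $[t_2,T_{\max})$, because again $\frac12+\frac N{2p}<1$ since $p>N$. (The choice $p>N+2$ leaves margin, e.g.\ for a Moser-iteration alternative.) Positivity is used through the comparison $u\le$ mild solution with the source dropped to its upper bound; here I would use that the semigroup is order-preserving, applying the comparison at the level of a supersolution.

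\textbf{Step 3: conclusion.} Combining Step 2 with the local boundedness on $[0,t_2]$ from \cref{lem:local}, $u$ is bounded on $(0,T_{\max})$, so $T_{\max}=\infty$ by \eqref{eq:extensibility-u}. The bounds from Steps 1 and 2 are then uniform on $[t_2,\infty)$, which yields \eqref{eq:mainbounded}.

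I expect the main obstacle to be the bookkeeping ensuring that all constants are truly time-independent. This relies on the damping factor $e^{-\sigma}$ in the semigroup estimates, together with the uniformity in \cref{prop:Lp} and \cref{prop:z-v-holder}. The rest is routine.
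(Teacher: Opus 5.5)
Your proposal is correct. Step~1 is the same as the paper's ``standard parabolic regularity'' step for $\nabla v$; the two arguments differ in Step~2.

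\textbf{Paper's route.} The paper obtains the $L^\infty$ bound for $u$ from the Moser-type iteration result \cite[Lemma~A.1]{TaoWinkler2012}. It applies that lemma to $u_t=\Delta u+\nabla\cdot F+G$ with $F=-\chi u\nabla v\in L^\infty L^p$ and $G=\lambda u-\mu u^2\in L^\infty L^{p/2}$. This is why it needs $p>N+2$ and $p/2>(N+2)/2$.

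\textbf{Your route.} You use the variation-of-constants formula for $e^{t(\Delta-1)}$. You bound the logistic source by its constant upper bound $(\lambda+1)_+^2/(4\mu)$, using that the semigroup preserves order. The drift is controlled by the $L^p\to L^\infty$ smoothing estimate for $e^{\sigma\Delta}\nabla\cdot$, which applies because $u\nabla v\cdot\nu=0$. The comparison is legitimate because the drift, viewed as a given forcing, is identical for $u$ and for the comparison function. Hence their difference satisfies a linear heat inequality with nonpositive forcing and zero initial data. Equivalently, you can apply Duhamel's formula directly to $u$ and use $u\ge0$.

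\textbf{Comparison.} Your argument uses the sign of $-\mu u^2$ and needs only $p>N$, in both steps. It is therefore more self-contained and slightly more economical. The paper's route requires a larger exponent, but it needs no sign information on $G$ and would carry over to quasilinear diffusion. Since \cref{prop:Lp} supplies any finite $p$, both arguments close the proof.
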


\begin{proof}
	Choose \(p>N+2\) and \(t_2\in(t_0,T_{\max})\). By
	\cref{prop:Lp},
	\begin{equation}
		\label{eq:Lp-final}
		\sup_{t\in[t_2,T_{\max})}
		\|u(t)\|_{L^p(\Om)}<\infty.
	\end{equation}
	Standard parabolic regularity theory  then implies that, for every
	\(t_3\in(t_2,T_{\max})\),
	\begin{equation}
		\label{eq:gradv-infty}
		\sup_{t\in[t_3,T_{\max})}
		\|\nabla v(t)\|_{L^\infty(\Om)}<\infty.
	\end{equation}

Set
\[
F:=-\chi u\nabla v,
\qquad
G:=\lambda u-\mu u^2.
\]
Then
\[
F\in L^\infty((t_3,T_{\max});L^p(\Omega;\mathbb R^N)),
\qquad
G\in L^\infty((t_3,T_{\max});L^{p/2}(\Omega)),
\]
and \(F\cdot\nu=0\) on \(\partial\Omega\). Since
\[
p>N+2,
\qquad
\frac p2>\frac{N+2}{2},
\]
all assumptions of \cite[Lemma~A.1]{TaoWinkler2012} are fulfilled
(with \(D\equiv1\)).	
	
	Now we may apply \cite[Lemma~A.1]{TaoWinkler2012} to the first equation
	in \eqref{eq:system} to obtain that
	\[
	\sup_{t\in[t_3,T_{\max})}
	\|u(t)\|_{L^\infty(\Om)}<\infty.
	\]
	
Thus by \cref{lem:local}, we have \(T_{\max}=\infty\).
	Combining the above estimates with local boundedness on
	\([0,t_3]\) proves \eqref{eq:mainbounded}.
\end{proof}

\medskip

\begin{proof}[Proof of \cref{thm:main}]
	If $\tau\ge1$, global existence and \eqref{eq:mainbounded} follow from
	\cref{prop:tau-geq1}. The same proposition yields
	\eqref{eq:explicitabsorbing} when $\tau>1$ and
	\eqref{eq:tau1absorbing} when $\tau=1$. If $0<\tau<1$, global existence and
	\eqref{eq:mainbounded} follow from \cref{prop:completion-tau<1}.
\end{proof}

\section*{Acknowledgments}
This work was supported by the National Natural Science Foundation of China
(Grant No.~12271505).


\begin{thebibliography}{99}

\bibitem{BellomoBellouquidTaoWinkler2015}
N.~Bellomo, A.~Bellouquid, Y.~Tao, and M.~Winkler,
\newblock Toward a mathematical theory of Keller--Segel models of pattern
formation in biological tissues,
\newblock \emph{Math. Models Methods Appl. Sci.} \textbf{25} (2015),
1663--1763.



\bibitem{BrezisMironescu2018}
H.~Brezis and P.~Mironescu,
\newblock Gagliardo--Nirenberg inequalities and non-inequalities: the full
story,
\newblock \emph{Ann. Inst. H. Poincar\'e C Anal. Non Lin\'eaire}
\textbf{35} (2018), no.~5, 1355--1376.
\newblock \href{https://doi.org/10.1016/j.anihpc.2017.11.007}
{doi:10.1016/j.anihpc.2017.11.007}.




\bibitem{ChenCMS2025}
L.~Chen,
\newblock \emph{Global well-posedness for a two-dimensional Navier--Stokes--Cahn--Hilliard--Boussinesq system with singular potential},
\newblock \emph{Comm. Math. Sci.} \textbf{23} (2025), 509--540.




\bibitem{Lieberman2005}
G.~M. Lieberman,
\newblock \emph{Second Order Parabolic Differential Equations},
\newblock revised ed., World Scientific, Hackensack, NJ, 2005.

\bibitem{LinMu2016}
K.~Lin and C.~Mu,
\newblock Global dynamics in a fully parabolic chemotaxis system with
logistic source,
\newblock \emph{Discrete Contin. Dyn. Syst.}
\textbf{36} (2016), 5025--5046.
\newblock \href{https://doi.org/10.3934/dcds.2016018}
{doi:10.3934/dcds.2016018}.

\bibitem{TaoWinkler2012}
Y.~Tao and M.~Winkler,
\newblock Boundedness in a quasilinear parabolic--parabolic
Keller--Segel system with subcritical sensitivity,
\newblock \emph{J. Differential Equations} \textbf{252} (2012),
692--715.

\bibitem{TaoWinkler2015ZAMP}
Y.~Tao and M.~Winkler,
\newblock Boundedness and decay enforced by quadratic degradation in a
three-dimensional chemotaxis--fluid system,
\newblock \emph{Z. Angew. Math. Phys.} \textbf{66} (2015), 2555--2573.

\bibitem{TelloWinkler2007}
J.~I. Tello and M.~Winkler,
\newblock A chemotaxis system with logistic source,
\newblock \emph{Comm. Partial Differential Equations} \textbf{32} (2007),
849--877.

\bibitem{Winkler2010}
M.~Winkler,
\newblock Boundedness in the higher-dimensional parabolic--parabolic
chemotaxis system with logistic source,
\newblock \emph{Comm. Partial Differential Equations} \textbf{35} (2010),
1516--1537.



\bibitem{Xiang2018JMAA}
T.~Xiang,
\newblock How strong a logistic damping can prevent blow-up for the minimal
Keller--Segel chemotaxis system?,
\newblock \emph{J. Math. Anal. Appl.} \textbf{459} (2018), 1172--1200.
\newblock \href{https://doi.org/10.1016/j.jmaa.2017.11.022}
{doi:10.1016/j.jmaa.2017.11.022}.

\bibitem{Xiang2018SIAM}
T.~Xiang,
\newblock Chemotactic aggregation versus logistic damping on boundedness in
the 3D minimal Keller--Segel model,
\newblock \emph{SIAM J. Appl. Math.} \textbf{78} (2018), 2420--2438.
\newblock \href{https://doi.org/10.1137/17M1150475}
{doi:10.1137/17M1150475}.


\bibitem{YangCaoJiangZheng2015}
C.~Yang, X.~Cao, Z.~Jiang and S.~Zheng,
\newblock Boundedness in a quasilinear fully parabolic Keller--Segel
system of higher dimension with logistic source,
\newblock \emph{J. Math. Anal. Appl.} \textbf{430} (2015), 585--591.

\bibitem{Zheng2017}
J.~Zheng,
\emph{Boundedness and global asymptotic stability of constant equilibria
	in a fully parabolic chemotaxis system with nonlinear logistic source},
J. Math. Anal. Appl. \textbf{450} (2017), 1047--1061.

\bibitem{ZhengEtAl2018}
J.~Zheng, Y.~Li, G.~Bao and X.~Zou,
\newblock A new result for global existence and boundedness of solutions to a
parabolic--parabolic Keller--Segel system with logistic source,
\newblock \emph{J. Math. Anal. Appl.} \textbf{462} (2018), 1--25.

\end{thebibliography}
\end{document}